\numberwithin{equation}{section}  
\def \pt {\partial}
\def \eq$#1${\begin{equation}#1\end{equation}}
\def \F {\mathcal{F}}
\def \E {\mathbb{E}}
\def \I {\mathbbm{1}}
\newcommand{\dd}{\textup{d}}
\newcommand{\blue}[1]{{\color{black}#1}}
\newcommand{\norm}[1]{\left\lVert#1\right\rVert}
\newenvironment{breakablealgorithm}
{% \begin{breakablealgorithm}
	\begin{center}
		\refstepcounter{algorithm}% New algorithm
		\hrule height.8pt depth0pt \kern2pt% \@fs@pre for \@fs@ruled
		\renewcommand{\caption}[2][\relax]{% Make a new \caption
			{\raggedright\textbf{\ALG@name~\thealgorithm} ##2\par}%
			\ifx\relax##1\relax % #1 is \relax
			\addcontentsline{loa}{algorithm}{\protect\numberline{\thealgorithm}##2}%
			\else % #1 is not \relax
			\addcontentsline{loa}{algorithm}{\protect\numberline{\thealgorithm}##1}%
			\fi
			\kern2pt\hrule\kern2pt
		}
	}{% \end{breakablealgorithm}
		\kern2pt\hrule\relax% \@fs@post for \@fs@ruled
	\end{center}
}
\def \tpsi {\tilde{\psi}}
\def \tq {\tilde{q}}
\def \br {\bm{r}}
\def \bn {\bm{n}}
\def \tpsi {\tilde{\psi}}
\def \tphi {\tilde{\phi}}
\def \bphi {\bar{\phi}}
\def \K {\mathcal{K}}
\def \D {\mathcal{D}}
\def \R {\mathbb{R}}
\def \P {\mathbb{P}}
\def \S {\mathbb{S}}
\newtheorem{theorem}{Theorem}[section]
\newtheorem{lemma}[theorem]{Lemma}
\newtheorem{remark}[theorem]{Remark}
\title{Random Source Iteration Method: Mitigating the Ray Effect in the Discrete Ordinates Method}
 \author[1]{Jingyi Fu \thanks{nbfufu@sjtu.edu.cn} }
\author[1,2,3]{Lei Li \thanks{leili2010@sjtu.edu.cn} }
\author[1,2]{Min Tang \thanks{tangmin@sjtu.edu.cn} }
\affil[1]{School of Mathematical Sciences, Shanghai Jiao Tong University, Shanghai, 200240, P.R.China}
\affil[2]{Institute of Natural Sciences, MOE-LSC, Shanghai Jiao Tong University, Shanghai, 200240, P.R.China}
\affil[3]{Shanghai Artificial Intelligence Laboratory}
\date{}
\begin{document}
	
	\maketitle

\begin{abstract}
The commonly used velocity discretization for simulating the radiative transport equation (RTE) is the discrete ordinates method (DOM). One of the long-standing drawbacks of DOM is the phenomenon known as the ray effect. Due to the high dimensionality of the RTE, DOM results in a large algebraic system to solve. The Source Iteration (SI) method is the most standard iterative method for solving this system. In this paper, by introducing randomness into the SI method, we propose a novel random source iteration (RSI) method that offers a new way to mitigate the ray effect without increasing the computational cost. We have rigorously proved that RSI is unbiased with respect to the SI method and that its variance is uniformly bounded across iteration steps; thus, the convergence order with respect to the number of samples is $1/2$. Furthermore, we prove that the RSI iteration process, as a Markov chain, is ergodic under mild assumptions. Numerical examples are presented to demonstrate the convergence of RSI and its effectiveness in mitigating the ray effect.
\end{abstract}

	\section{Introduction}
The linear radiative transport equation (RTE) is widely used to model the propagation of particles or radiation through various media. It finds significant applications across a range of disciplines, including astrophysics, nuclear engineering, and thermodynamics. The steady-state anisotropic RTE takes the following form:
    \begin{subequations}\label{3dori}
    \begin{numcases}{}
		& $\Omega \cdot \nabla \psi(\br,\Omega)+\Sigma_T(\br)\psi(\br,\Omega)=\Sigma_S(\br)\phi(\br,\Omega)+Q(\br),\quad \br \in \D , \quad \Omega\in \S^2,$\label{3deqpsiori}\\
		&$\psi(\br,\Omega)=\psi_{\pt \D}^-(\br,\Omega), \quad\br\in\pt \D,\quad \Omega\cdot \bn_{\br}<0,$\label{3dbcori}\\
&$\displaystyle\phi(\br,\Omega)=\int_{\S^2}\K(\Omega,\Omega')\psi(\br,\Omega')\dd \Omega'. $\label{3deqphiori}
	\end{numcases}
    \end{subequations}
	Here $\br$ denotes the spatial variable within a bounded convex domain $\D \subset \R^3$, and $\Omega$ represents the angular variable on the unit sphere $\S^2$. The function $\psi(\br,\Omega)$ is the probability density function of particles moving in direction $\Omega$ at position $\br$. The parameters $\Sigma_T$, $\Sigma_S$ and $Q$ correspond to the total cross-section, scattering cross-section, and the source term, respectively. The scattering kernel $\K(\Omega,\Omega')$ characterizes the probability that particles traveling in direction $\Omega'$ are scattered into direction $\Omega$; $\bn_{\br}$ is the outward-pointing normal vector at the boundary $\br\in\pt \D$.

    There are two main categories of numerical methods for solving the RTE: particle methods and PDE-based methods. The particle methods (or Monte Carlo (MC) methods) replicate the behavior of individual particles as they interact with the medium, undergoing scattering or absorption events \cite{spanier1959monte,LewisMiller}. These methods are typically more straightforward to implement and can be parallelized for high-dimensional problems involving irregular and complex geometries. However, they come with a high computational cost, particularly for steady-state problems, and the statistical noise is unavoidable. On the other hand, deterministic methods are based on PDE solvers \cite{colomer2013parallel, dedner2002adaptive, giani2016hp, centuryreview}. They can outperform MC methods in terms of speed when addressing problems with simpler geometries. Nevertheless, they require mesh dissection or other preprocessing steps, and parallelization becomes more challenging for irregular or complex geometries.

   Note that \eqref{3dori} is a differential-integral equation, where particles moving in different directions are coupled together by the integral term in \eqref{3deqphiori}. The discrete ordinates method (DOM), commonly referred to as the $S_N$ method, is the most popular angular discretization technique. The DOM represents the solution through the values of $\psi(\br,\Omega_m)$ at discrete angular directions $\Omega_m$, with the integral term approximated by a weighted sum of these values. The DOM is favored due to its positive density fluxes and the ease of applying boundary and interface conditions \cite{LewisMiller, centuryreview}. Assuming there are $M$ discrete ordinates $\Omega_1$, $\Omega_2$, $\cdots$, $\Omega_M$, with respective weights $\omega_1$, $\omega_2$, $\cdots$, $\omega_M$, then the DOM system for $\psi_m(\br)\approx\psi(\br,\Omega_m)$ ($m\in V=\{1,2\cdots,M\}$) is written as
\begin{subequations}\label{3ddom}
	\begin{numcases}{}
		&$\Omega_m \cdot \nabla \psi_m(\br)+\Sigma_T(\br)\psi_m(\br)=\Sigma_S(\br)\phi_m(\br)+Q(\br),\quad \br \in \D  , \quad m\in V,$ \label{3deqpsi}\\
		&$\psi_m(\br)=\psi_{\pt \D}^-(\br,\Omega_m),\quad \br \in \pt\D, \quad \Omega_m\cdot \bn_{\br}<0,$ \label{3dbc}\\
% 		&$\psi_m(\br)=\psi_{\Gamma_I}^-(\br,\Omega_m),\quad \br \in \Gamma_I \subset \pt \D, \quad \Omega_m\cdot \bn_{\br}<0,$ \label{3dbc1}\\
% 		&$\psi_m(\br)=\psi_{M+1-m}(\br),\quad \br \in \Gamma_R \subset \pt \D, \quad m\in V,$ \label{3dbc2}\\
		&$\displaystyle\phi_m(\br)=\sum_{k\in V} \omega_k K_{km}\psi_k(\br),  $\label{3deqphi}
	\end{numcases}
	\end{subequations}
where $\phi_m(\br)\approx \phi(\br,\Omega_m)$, $K_{km}\approx \K(\Omega_m,\Omega_k)$.
To ensure mass conservation at the discrete level, the discrete scattering kernel  $K_{km}$ and weights $\omega_m$ must satisfy the following properties:
    \eq$\label{assK} \sum_{k\in V}\omega_k K_{km}=1, \ \forall m \in V; \quad \sum_{m\in V}\omega_m K_{km}=1, \ \forall k \in V; \quad \sum_{m\in V}\omega_m=1.$
    %are the discrete ordinates method ($S_N$)\cite{schuster1905radiation,chandrasekhar2013radiative} and the spherical harmonic function method ($P_N$)\cite{marshak1947note, davison1958spherical}. The $P_N$ method uses $N$ spherical harmonic functions as basis functions to expand the angular distribution of the solution $\psi$ and source term $Q$. The discrete ordinates method represents the solution by the values of $\psi(\br,\bu_m)$ at some discrete angular $\bu_m$, and the integral term is approximated by a weighted sum of these values. The $S_N$ method is widely used since its density fluxes are positive and its boundary or interface conditions are easy to impose. 

 The DOM has long been known to have a significant drawback: when the number of discrete ordinates is not sufficiently large, it can produce nonphysical spatial oscillations along the discrete angular paths in the macroscopic particle density, $\sum_{k\in V}\omega_k\psi_k(\br)$. These nonphysical oscillations are independent of the spatial discretization and cannot be mitigated by refining the spatial meshes. This phenomenon is referred to as the "ray effect" \cite{lathrop1968ray}.  This issue is particularly pronounced in cases of weak scattering, \blue{and when $Q(\br)$ exhibits discontinuities,} or inflow boundary conditions. 
    For high-dimensional problems, the convergence order of DOM in the angular variable is notably low, especially when the solutions exhibit the ray effect or for problems characterized by strongly anisotropic scattering \cite{ROM}. To capture an accurate solution, an extremely large number of discrete ordinates is required, which can be computationally prohibitive.

The classical spatial discretizations of the DOM system \eqref{3ddom} include diamond-difference methods, weighted-difference methods, nodal methods, discontinuous finite element methods \cite{morel1996linear, centuryreview}, and more recently, the discontinuous Galerkin method \cite{liu2010analysis, sheng2021uniform}. Regardless of the method chosen for spatial discretization, a large algebraic system must be inverted due to the high dimensionality of the equation.

Due to the coupling of different angular directions, the most widely used iterative approach for solving the resulting large system is the source iteration method (SI).  In the $n$th iteration step of SI, the density fluxes and scattering sources are denoted as $\psi_m^{(n)}(\br)$ and $\phi_m^{(n)}(\br)$, respectively. The SI method uses the scattering source $\phi_m^{(n-1)}(\br)$  obtained in the previous iteration to update the density flux $\psi_m^{(n)}(\br)$ using equations \eqref{3deqpsi} and \eqref{3dbc}, and subsequently derives a new scattering source $\phi_m^{(n)}(\br)$  via equation \eqref{3deqphi}.
    More precisely, beginning with an initial estimate $\phi_m^{(0)}(\br)$, the system is iteratively solved as follows:
    \begin{subequations}\label{3dsi}
       \begin{numcases}{}
		&$\Omega_m \cdot \nabla \psi_m^{(n)}(\br)+\Sigma_T(\br)\psi_m^{(n)}(\br)=\Sigma_S(\br)\phi_m^{(n-1)}(\br)+Q(\br),\quad \br \in \D  , \quad m\in V,$ \label{3deqpsisi}\\
		&$\psi_m^{(n)}(\br)=\psi_{\Gamma}^-(\br,\Omega_m),\quad \br \in \pt \D, \quad \Omega_m\cdot \bn_{\br}<0.$ \label{3dbcsi}\\
% 		&$\psi_m^{(n)}(\br)=\psi_{\Gamma_I}^-(\br,\Omega_m),\quad \br \in \Gamma_I, \quad \Omega_m\cdot \bn_{\br}<0,$ \label{3dbcsi1}\\
% 		&$\psi_m^{(n)}(\br)=\psi_{M+1-m}^{(n)}(\br),\quad \br \in \Gamma_R, \quad m\in V,$ \label{3dbcsi2}\\
		&$\displaystyle\phi_m^{(n)}(\br)=\sum_{k\in V} \omega_k K_{km}\psi_k^{(n)}(\br).  $\label{3deqphisi}
	\end{numcases} 
    \end{subequations}

    In this paper, we propose a random source iteration (RSI) method that offers a new way to mitigate the ray effect without increasing the computational cost. The idea is to introduce randomness into the SI method. 
More precisely, let  $M=|V|$ represent the number of ordinates of a fixed quadrature set. From $V$,
    $G\ll M$ ordinates are randomly selected, with their indices forming a subset $V^{(n-1)}\subset V$. Then the scattering sources $\tilde\phi_{m}^{(n-1)}$ for a newly chosen subset of ordinates with $m\in V^{(n)}$ are approximated by a weighted function of $\tilde\psi^{(n-1)}_{k}$ ($k\in V^{(n-1)}$). The spatial transport equation with the scattering source $\tilde\phi_{m}^{(n-1)}$ ($m\in V^{(n)}$) is then solved for $m\in V^{(n)}$ to get $\tilde\psi_{m}^{(n)}$ ($m\in V^{(n)}$). 
    
    Note that only $G$ ordinates are selected in each iteration step, which significantly reduces the computational cost compared to the original SI.  When $G=1$,  the method selects just one ordinate per iteration step, eliminating the need for collecting information from other ordinates and enabling perfect parallelism in the angular variable. We refer to one execution of RSI as one sample, with all samples being independent of each other.  One might be concerned about using $G$ ordinates $\tilde\psi^{(n-1)}_{k}$ ($k\in V^{(n-1)}$) to approximate the summation $\displaystyle\phi_m^{(n-1)}(\br)=\sum_{k\in V} \omega_k K_{km}\psi_k^{(n-1)}(\br)$ ($m\in V^{(n)}$). However, thanks to the randomized process and the careful selection of $V^{(n)}$ and the weights, we can rigorously prove that the expectation of all samples converges to the solution of SI, and the variance is uniformly bounded across iteration steps. Furthermore, we prove that the RSI iteration process, as a Markov chain, is ergodic under mild assumptions, indicating the ergodicity of the process. Consequently, all tail data after SI converges can be used to compute the expectation.

The RSI method can be viewed as deterministic in the spatial variable while employing a Monte Carlo approach for the angular variable. On one hand, RSI inherits the Monte Carlo method's advantage of easy parallelization, allowing each sample of RSI to be run on different processors in parallel. The expectations of these samples can then be taken to obtain the SI solution. On the other hand, RSI reduces computational cost and statistical noise through the use of a spatial deterministic solver. Numerical results show that the convergence rate of RSI is $1/2$ with respect to the number of samples $S$, which is identical to the Monte Carlo method. \blue{Moreover, since SI only converges when $\Sigma_S<\Sigma_T$ uniformly in space, this requirement is also necessary for RSI.}

We can compare the computational complexities of SI and RSI. Suppose we use $I$ spatial grids and $M$ discrete ordinates in the discretization process. In each iteration, updating the $I\times M$ values of $\phi_m^{(n)}$ as in equation \eqref{3deqphisi} requires in $I\times M^2$ multiplications. Subsequently, the sweeping method is applied to solve the transport operator for each angular direction. The computational cost for performing this sweeping method is $O(I\times M)$ \cite{centuryreview, GaoZhao2009}.  Assuming the DOM has first-order convergence, achieving $O(\epsilon)$ accuracy in the velocity variable requires $M=O(\epsilon^{-1})$ ordinates. Equation \eqref{3dsi} indicates that the total computational cost for SI is $(I\times \epsilon^{-2}+CI\times \epsilon^{-1})\times N_{\text{iter}}$, where $N_{\text{iter}}$ is the number of iterations and $C$ is a constant.
Since the variance of RSI is bounded, to attain $O(\epsilon)$ accuracy, the necessary number of samples is $O(\epsilon^{-2})$. Each sample performs one sweep per iteration step, resulting in a complexity of $O(I\times \epsilon^{-2})$ for $O(\epsilon^{-2})$ samples. Given that  RSI is unbiased w.r.t. SI for each iteration step, the overall complexity is $O(I\times \epsilon^{-2}\times N_{\text{iter}})$. Therefore, when the DOM exhibits first-order convergence in the angular variable, the overall computational cost to achieve the same level of accuracy is identical for both RSI and SI.

 There are two benefits of RSI: 1) When the convergence order of DOM in the velocity variable is less than 1, RSI's overall complexity is lower than that of SI to achieve the same accuracy. This is particularly relevant for many benchmark tests that exhibit ray effects, as we will demonstrate with two test cases in the numerical section. 2) When the number of discrete ordinates $M$ is small, DOM can exhibit the ray effect. Conversely, assuming an error tolerance $\epsilon$, RSI can be applied to a DOM system with significantly more than $\epsilon^{-1}$ ordinates ($M\gg \epsilon^{-1}$), running $\epsilon^{-2}$ samples and taking the expectation. Since $M\gg \epsilon^{-1}$, the error of the mean is much smaller than $\epsilon$,  so the error primarily comes from the sample average, resulting in $O(\epsilon)$  accuracy. Consequently, RSI offers accuracy similar to SI but mitigates the ray effect since more ordinates are used. As a consequence of the ergodicity, an extra benefit is that all tail data in the RSI can be used as samples for the computation of the expectation. Hence, the number of chains might be even smaller, resulting in a lower cost to achieve the same level of accuracy.

In recent years, several attempts have been made to mitigate the ray effect. One popular approach is the first collision source method, \cite{Lathrop1971}, which decomposes the solution into collided and uncollided parts. The uncollided part is solved using ray tracing. However, the complexity of this method increases with the addition of more sources and the presence of reflective boundary conditions \cite{wareing1998first,Dai2023}. The region angular adaptive algorithm has also been developed, based on angular adaptive refinement techniques that require local error estimation \cite{zhang2018goal}. The reference frame rotation method \cite{Tencer2016} averages solutions for various reference frame orientations to mitigate the ray effect; however, it is limited to isotropic scattering. The rotational summation method \cite{camminady2019ray} solves the time-dependent RTE, rotating the set of ordinates around a random axis after each time step. Since additional technical steps are usually employed to mitigate the ray effect, rigorous error analysis is rare for the aforementioned methods. The concept of introducing randomness into velocity discretization has been explored in several previous works. The random ordinate method (ROM), discussed in \cite{ROM}, randomly selects one ordinate from each cell in the velocity space, solves the resulting DOM systems, and then averages the solutions. Our approach, which is based on SI, shares similarities with the \blue{quasi-random discrete ordinates method (QRDOM) proposed in \cite{de2019quasi, pasmann2023quasi,Konzen2023}, but also differs from it. Specifically, our approach, due to the specific method of selecting the new ordinate for the next iteration step, requires only one ordinate per iteration, whereas QRDOM requires more.} Moreover, we provide rigorous analytical results for the properties of RSI.

An outline of the remainder of this article is as follows: In Section \ref{sec:rsi}, we introduce the RSI method and present its main analytical results. In Section \ref{sec:proof}, we rigorously prove the stochastic properties of RSI, including the expectation that yields the solution of the SI, uniformly bounded variance, contractility, and ergodicity. In Section \ref{sec:numer}, we present numerical examples that test and confirm our theoretical results, demonstrating the performance of the RSI method. We conclude with a discussion in Section \ref{sec:discussion}.

	\section{RSI method and its properties}\label{sec:rsi}
	
\subsection{The RSI method}
We present the RSI for the steady state RTE based on SI as in \eqref{3deqphisi}.
We partition the set $V=\{1,2\cdots,M\}$ into $G$ groups, denoted as $V_1,V_2,\cdots,V_G$. The number of ordinates in the $g$-th group is represented by $|V_g|$ ($g=1,2,\cdots, G$). In the most extreme case where
$G=1$, the computational complexity of each sample is minimized. In the first step, we select one ordinate from each group with uniform probability and denote the index set of these chosen discrete ordinates by $V^{(0)}$.  We then initialize $\tpsi_m^{(0)}(\br)=\psi_m^{(0)}(\br)$ for $m\in V^{(0)}$, and compute $\tphi_m^{(0)}(\br)$ by the equation
\eq$\tphi_m^{(0)}(\br)= \sum_{k\in V^{(0)}}\omega_k K_{km}  |V(k)|\tpsi_{k}^{(0)}(\br),\label{tildephi0}$
where $V(k)$ is the group to which index 
$k$ belongs. 
For subsequent steps, we proceed by induction. Assuming the index set chosen in the $n-1$th iteration  is $V^{(n-1)}$, with $|V^{(n-1)}|=G$,
in the $n$th step, we define $c_m^{(n-1)}$ by
\eq$c_m^{(n-1)}=\displaystyle\sum_{k\in V^{(n-1)}}\omega_k K_{km},\quad \forall m\in V.\label{cmn}$
Then, one ordinate is randomly chosen from each  $V_g(g=1,2,\cdots,G)$ to form a subset $V^{(n)}$ of $V$, following the rule:
	\eq$\P\left(m\in V^{(n)}\bigg|V^{(n-1)}\right)=\frac{\omega_m c_m^{(n-1)}}{\sum_{m'\in V(m)}\omega_{m'} c_{m'}^{(n-1)}}:=p_{m}^{(n)}.\label{qkn}$
It is clear that for all $g=1,2,\cdots,G$, 
\begin{equation}
\sum_{m\in V_g}p_{m}^{(n)}=1,\label{qVg}
\end{equation}
 which is consistent with the fact that one index is chosen from each group. Moreover, the selection of the ordinate from each group is independent of the others.
 If $p_m^{(n)}=0$ or $c_m^{(n-1)}=0$, given the positivity of $\omega_k$ and the nonegativity of $K_{km}$ (for $\forall k,m\in V$),  it follows that  $K_{km}=0$ for all $k\in V^{(n-1)}$. This implies that particles moving in direction $\Omega_k$ (for all $ k\in V^{(n-1)}$) can not be scattered into direction $\Omega_m$, and thus $\Omega_m$ cannot be selected for the next iteration step.

One might be concerned about the computational cost of determining $p_m^{(n)}$.  In practice, to reduce computational cost at each iteration, since
  $$
  \sum_{m'\in V(m)}\omega_{m'} c_{m'}^{(n-1)}=\sum_{m'\in V(m)}\omega_{m'} \sum_{k\in V^{(n-1)}}\omega_k K_{km'}=\sum_{k\in V^{(n-1)}}\omega_k\sum_{m'\in V(m)}\omega_{m'}K_{km'},
  $$
One can pre-compute $\omega_k K_{km}$ for all $ k,m\in V$, $\omega_k\sum_{m\in V_g}\omega_m K_{km}$ for all $ g=1,\cdots,G$ and all $ k\in V$. Then both $c_m^{(n-1)}$ in \eqref{cmn} and the denominator in \eqref{qkn} can be obtained by $G-1$ summations. Then $O(GM)$ summations are needed to obtain $p_m^{(n)}$ which increases little computational cost.

    Then we update the density flux function $\tpsi_k^{(n)}$ according to equations \eqref{3deqpsisi}-\eqref{3dbcsi} with the scattering source $\tphi_k^{(n-1)}(\br)$ only for $k\in V^{(n)}$, and the new scattering sources for the next iteration step are needed only for $m\in V^{(n+1)}$. $\tphi_m^{(n)}(\br)$ ($m\in V^{(n+1)}$) are determined by a weighted sum of these density functions:
	\eq$\tphi_m^{(n)}(\br)=\sum_{k\in V^{(n)}}\omega_k K_{km} \frac{1}{p_{k}^{(n)}}\tpsi_{k}^{(n)}(\br),\qquad m\in V^{(n+1)}.\label{1drsi0}$ 
	Here, since the probability that index $k$ is chosen in $V^{(n)}$ is $p_{k}^{(n)}$, we introduce an extra weight $1/p_{k}^{(n)}$, ensuring that $\frac{\tpsi_{k}^{(n)}}{p_{k}^{(n)}}$ gives the normalized density flux of $V(m)$.
	Note that when $p_{k}^{(n)}=0$, we have $k\notin V^{(n)}$. For convenience, we define 
	\eq$\tq_{k}^{(n)}=\left\{
	\begin{aligned}
	&0, &&p_{k}^{(n)}=0, \\
	&\frac{1}{p_{k}^{(n)}}, &&p_{k}^{(n)}\neq 0, 
	\end{aligned}
	\right.\label{deftq}$
	then \eqref{1drsi0} can be rewritten as
	\eq$\tphi_m^{(n)}(\br)=\sum_{k\in V}\omega_k K_{km} \tq_{k}^{(n)}\tpsi_{k}^{(n)}(\br)\I_{k\in V^{(n)}},\qquad m \in V^{(n+1)}.\label{1drsi}$

	The RSI method reads:
	\begin{breakablealgorithm}
		\caption{Random Source Iteration}
		\label{SIR}
		\begin{algorithmic}[1]
			\STATE Choose one ordinate from each group with uniform probability to form $V^{(0)}$.
            \STATE Initialize $\tpsi_m^{(0)}(\br)=\psi_m^{(0)}(\br)$, for $m\in V^{(0)}$.
			\FOR{$n=1,2,\cdots$}
			\STATE Pick a subset $V^{(n)}$ from $V$ according to rule \eqref{qkn}.
			\STATE Compute $\tphi_{m}^{(n-1)}(\br)$ by \eqref{1drsi} with $\tpsi_{k}^{(n-1)}(\br) (k\in V^{(n-1)})$ for $m\in V^{(n)}$.
			\STATE Solve $\tpsi_{m}^{(n)}(\br)$ by \eqref{3deqpsisi}-\eqref{3dbcsi} with scattering source $\tphi_m^{(n-1)}(\br)$ for $m\in V^{(n)}$. 
			\ENDFOR
		\end{algorithmic}
	\end{breakablealgorithm}

% \begin{remark}
% Denote the group that index $m$ belongs to by $V(m)$. In practice, we consider the case $p_0=1$, and let $p_{i\to j}^{(n)}$ be
% \eq$p_{i\to j}^{(n)}=\prod_{m\in S_j}\frac{\sum_{k\in S_i}K_{km}}{\sum_{m'\in V(m)}\sum_{k\in S_i}K_{km'}}.$
% \end{remark}
% 	In practice, to reduce the computational cost, we solve $\tpsi_{m}^{(n)}(\br)$ only for $m\in V^{(n)}$. And if we determine $V^{(n+1)}$ in advance, then we can compute $\tphi_{m}^{(n)}(\br)$ only for $m\in V^{(n+1)}$. However, for convenience of proof, we choose to write algorithm in this way.

\subsection{Main theoretical results}
In each iteration step, since $V^{(n)}$ is chosen randomly, $\tpsi_{m}^{(n)}(\br)$ is also random. We refer to one realization of RSI as one sample.
One might be concerned with the use of $k\in V^{(n)}$ to update the sources as in \eqref{1drsi}; however, we are able to prove the following properties of RSI:
\begin{enumerate}
    \item RSI is unbiased with respect to SI. By taking the expectation of all samples, the density flux $\tpsi_m^{(n)}(\br)$ and scattering source $\tphi_m^{(n)}(\br)$ in RSI yield the values of $\psi_m^{(n)}$ and $\phi_m^{(n)}$ obtained by SI.
    \item The variance of RSI is uniformly bounded with respect to the iteration steps $n$. Owing to this uniform boundedness of variance, one can estimate the required number of samples to achieve a given accuracy after computing the numerical expectation.
    \item The average of all iterations in RSI converges to the solution of SI. This suggests that one can run each RSI sample for more iteration steps, utilize data from the tail end, and then take the expectation to obtain the solution of SI. This offers the flexibility to terminate the iteration of RSI.
\end{enumerate}

Before stating the main theorems of this paper, we introduce some notations. Consider a state
 $\bm{\tpsi}$ in RSI, defined as
\begin{gather}
\bm{\tpsi}:=(\tpsi_m)_{m\in V}=(\tpsi_1,\cdots, \tpsi_M).
\end{gather}
Let  $\norm{\cdot}_2=\left(\int_{\D}(\cdot)^2\dd \br \right)^{\frac{1}{2}}$. Then, a state can be viewed as a point in $S_{\psi}:=(L^2(\D; \Sigma_T))^{\otimes M}$,  where $L^2(\D; \Sigma_T)$ denotes the space of square-integrable functions with weight $\Sigma_T$. 
We define the distance between two such states by
\begin{equation}
\mathrm{dist}(\bm{\tpsi}_1, \bm{\tpsi}_2)
:=\sum_{m\in V}\omega_m \|\tpsi_{m,1}-\tpsi_{m,2}\|_{L^2(\D; \Sigma_T)}
=\sum_{m\in V}\omega_m \|\sqrt{\Sigma_T}(\tpsi_{m,1}-\tpsi_{m,2})\|_{2}.
\end{equation}
In RSI, each value of $V^{(n)}$ can be represented as a vector $v\in \{0, 1\}^M=: \mathcal{S}_V$ with $\sum_{i\in V_g} v_i=1$ for each $g=1,\cdots, G$.
Clearly, the RSI forms a Markov chain on the state space $\mathcal{S}:=\mathcal{S}_V\times \mathcal{S}_{\psi}$. Moreover, $V^{(n)}$ itself is also a Markov chain on $\mathcal{S}_V$ since its evolution does not depend on $\bm{\psi}$.

Consider two probability measures $\mu_i\in \mathcal{P}(\mathcal{S})$ ($i=1,2$) where $\mathcal{P}(\mathcal{S})$ denotes the set of all probability measures on $S$. What we care about is the law of $\bm{\tilde{\psi}}$, which is the marginal distribution on $\mathcal{S}_{\psi}$ 
\begin{equation}
\mu_i^{\psi}:=\int_{\mathcal{S}_V}\mu_i(dv, \cdot)\in \mathcal{P}(\mathcal{S}_{\psi}).
\end{equation}
We use the Wasserstein-1 distance \cite{santambrogio2015optimal}, given by
\begin{equation}\label{eq:Wa1}
W_1(\mu_1^{\psi}, \mu_2^{\psi})=\inf_{\gamma\in \Pi(\mu_1^{\psi}, \mu_2^{\psi})} \int_{\mathcal{S}_{\psi} \times \mathcal{S}_{\psi}} \mathrm{dist}(\bm{\tpsi}_1, \bm{\tpsi}_2) d\gamma(\bm{\tpsi}_1, \bm{\tpsi}_2)
\end{equation}
to gauge the closeness of two measures in $\mathcal{P}(\mathcal{S}_{\psi})$.
Here, $\Pi(\mu_1, \mu_2)$ is the so-called set of transport plans, and each transport plan $\gamma \in \Pi(\mu_1, \mu_2)$ is a joint distribution on $\mathcal{S}{\psi} \times \mathcal{S}{\psi}$, with marginals that are $\mu_1^{\psi}$ and $\mu_2^{\psi}$, respectively.
With the above notations,  we present the main results of this paper.

\begin{theorem}(unbiased w.r.t SI)\label{expectation}
		Suppose $V^{(n)}$ is chosen according to \eqref{qkn}, then $\tpsi_m^{(n)}$ and $\tphi_m^{(n)}$ are unbiased estimates of $\psi_m^{(n)}$ and $\phi_m^{(n)}$, i.e. \eq$\E\left[\tpsi_m^{(n)}\right]\equiv\psi_m^{(n)}, \quad \E\left[\tphi_m^{(n)}\right]\equiv\phi_m^{(n)}.$
	\end{theorem}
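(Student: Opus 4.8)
The plan is to prove the theorem by induction on the iteration step $n$, taking expectations conditioned on the history of the chain up to step $n-1$ and using the tower property. The key structural observation is that the randomness enters in two places: the choice of $V^{(n)}$ via the rule \eqref{qkn}, and the reweighting by $\tq_k^{(n)}$ in \eqref{1drsi}. These two are designed to cancel: the probability $p_k^{(n)}$ of selecting ordinate $k$ appears in the numerator of the importance weight $\tq_k^{(n)} = 1/p_k^{(n)}$, so that $\E[\tq_k^{(n)}\I_{k\in V^{(n)}} \mid \mathcal{F}_{n-1}] = 1$ whenever $p_k^{(n)}\neq 0$, and the degenerate case $p_k^{(n)}=0$ contributes nothing since then $K_{km}=0$ for all $m\in V^{(n-1)}$ (as noted in the text after \eqref{qkn}), so $k$ never appears in any relevant sum.

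First I would set up the filtration $\mathcal{F}_{n}:=\sigma(V^{(0)},\dots,V^{(n)})$ and establish the base case: $\tpsi_m^{(0)}=\psi_m^{(0)}$ for $m\in V^{(0)}$ by initialization, and for $\tphi_m^{(0)}$ defined in \eqref{tildephi0}, I would compute $\E[\tphi_m^{(0)}]$ by noting that $\P(k\in V^{(0)})$ equals $1/|V(k)|$ under the uniform choice, so the factor $|V(k)|$ in \eqref{tildephi0} exactly compensates, giving $\E[\tphi_m^{(0)}] = \sum_{k\in V}\omega_k K_{km}\psi_k^{(0)} = \phi_m^{(0)}$. Then, assuming $\E[\tpsi_m^{(n-1)}]=\psi_m^{(n-1)}$ for the inductive step, I would first show $\E[\tphi_m^{(n-1)}]=\phi_m^{(n-1)}$: conditioning on $\mathcal{F}_{n-1}$ (which determines $V^{(n-1)}$ and hence $\tpsi_k^{(n-1)}$ for $k\in V^{(n-1)}$, and determines $p_k^{(n)}$ and $\tq_k^{(n)}$), and then taking the conditional expectation over the choice of $V^{(n)}$, the indicator $\I_{k\in V^{(n)}}$ in \eqref{1drsi} has conditional expectation $p_k^{(n)}$, which cancels $\tq_k^{(n)}$. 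Care is needed here: the product $\tq_k^{(n)}\I_{k\in V^{(n)}}$ must be handled via the definition \eqref{deftq}, splitting on whether $p_k^{(n)}=0$. After taking the conditional expectation, I am left with $\E[\sum_{k\in V^{(n-1)}}\omega_k K_{km}\tpsi_k^{(n-1)}\mid\mathcal{F}_{n-1}]$-type terms, and then one more round: taking expectation over $V^{(n-1)}$ using \eqref{qkn} at level $n-1$ together with the inductive hypothesis, and observing that the normalization in \eqref{qkn} from the previous step combines with the $\tq_k^{(n-1)}$ weight carried inside $\tpsi_k^{(n-1)}$... actually the cleanest route is to carry the induction hypothesis jointly for $\tpsi$ and $\tphi$ and simply note that once $\E[\tphi_m^{(n-1)}]=\phi_m^{(n-1)}$ is known, the transport step \eqref{3deqpsisi}--\eqref{3dbcsi} is \emph{linear and deterministic} in the source, so $\E[\tpsi_m^{(n)}]$ solves the same linear transport problem with source $\E[\tphi_m^{(n-1)}]=\phi_m^{(n-1)}$, hence equals $\psi_m^{(n)}$ by \eqref{3deqpsisi}--\eqref{3dbcsi}; here one must also restrict attention to the event $\{m\in V^{(n)}\}$ and argue that on $\{m\notin V^{(n)}\}$ the value $\tpsi_m^{(n)}$ is irrelevant (it is never used), or adopt the convention that the estimator is the conditional one given $m$ is selected.

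The main subtlety — and the step I expect to require the most care — is the bookkeeping of \emph{which} $\sigma$-algebra one conditions on and the fact that $\tphi_m^{(n-1)}$ in \eqref{1drsi} is needed only for $m\in V^{(n)}$, so the weight $1/p_k^{(n)}$ that appears refers to the selection probabilities for step $n$, while the quantities $\tpsi_k^{(n-1)}$ being reweighted come from step $n-1$. One must verify that $\E[\I_{k\in V^{(n)}}\mid \mathcal{F}_{n-1}] = p_k^{(n)}$ holds even though the target set $V^{(n)}$ is chosen using probabilities $p_k^{(n)}$ that themselves depend on $V^{(n-1)}\in\mathcal{F}_{n-1}$ — this is exactly why the conditioning is on $\mathcal{F}_{n-1}$ and not something coarser. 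A secondary point: the claim $\E[\tpsi_m^{(n)}]\equiv\psi_m^{(n)}$ is an identity of functions of $\br$, so all expectations and the transport solve commute (the transport operator $(\Omega_m\cdot\nabla+\Sigma_T)^{-1}$ with the given inflow condition is bounded and linear), which I would state as a lemma or simply invoke linearity of \eqref{3deqpsisi}. Once these conditioning issues are laid out carefully, the cancellation is immediate and the induction closes; I would present it compactly as: base case, then $\tpsi^{(n-1)}$ unbiased $\Rightarrow$ $\tphi^{(n-1)}$ unbiased (the genuinely probabilistic step), then $\tphi^{(n-1)}$ unbiased $\Rightarrow$ $\tpsi^{(n)}$ unbiased (pure linearity).
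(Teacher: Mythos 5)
Your proposal is correct and follows essentially the same route as the paper's proof: induction on $n$, with the base case using $\P(k\in V^{(0)})=1/|V(k)|$ to cancel the factor $|V(k)|$ in \eqref{tildephi0}, the probabilistic step using the tower property with $\E\left[\I_{k\in V^{(n)}}\,\middle|\,\F^{(n-1)}\right]=p_k^{(n)}$ and the $\F^{(n-1)}$-measurability of $\tq_k^{(n)}$ and $\tpsi_k^{(n)}$ to cancel the importance weight, and the deterministic step invoking linearity of the transport solve \eqref{eq:solutionpsi}. Your treatment of the degenerate case $p_k^{(n)}=0$ matches the paper's (both rely on the remark following \eqref{qkn}), so no further comment is needed.
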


\begin{theorem}(uniformly boundedness of the variance)\label{variance}
	Assume that $\omega_k>0$, $K_{km}>0$, then $p_m^{(n)}$ in \eqref{qkn} are strictly positive. Let 
	\eq$\mathcal{M}_{m}^{(n)}=\E\left[\frac{1}{2}\int_{\D}\Sigma_T \sum_{g=1}^G\sum_{k\in V_g}\sum_{k'\in V_g}\left(\sqrt{\frac{p_{k'}^{(n)}}{p_{k}^{(n)}}}\omega_k K_{km}\tpsi_k^{(n)}-\sqrt{\frac{p_{k}^{(n)}}{p_{k'}^{(n)}}}\omega_{k'} K_{k'm}\tpsi_{k'}^{(n)}\right)^2 \dd \br\right],\label{assM}$
  and $\mathcal{M}=\max_{m,n}\{M_m^{(n)}\}$.
Then for any $n$, the variances $$ \E\left[\sum_{m \in V}\omega_m\norm{\sqrt{\Sigma_T}\left(\tpsi_m^{(n)}-\psi_m^{(n)}\right)}_2^2\right], \quad \text{and}\quad \E\left[\sum_{m \in V}\omega_m\norm{\sqrt{\Sigma_T}\left(\tphi_m^{(n)}-\phi_m^{(n)}\right)}_2^2\right]$$
are bounded by  
  \eq$\norm{\frac{\Sigma_S}{\Sigma_T}}_{\infty}^{2n}V_0+\frac{1-\norm{\frac{\Sigma_S}{\Sigma_T}}_{\infty}^{2n}}{1-\norm{\frac{\Sigma_S}{\Sigma_T}}_{\infty}^2}\mathcal{M},\quad \mbox{with }V_0=\E\left[\sum_{m\in V}\omega_m \norm{\sqrt{\Sigma_T}\left(\tphi_m^{(0)}-\phi_m^{(0)}\right)}_{2}^2 \right],$
and hence are uniformly bounded w.r.t. $n$.
\end{theorem}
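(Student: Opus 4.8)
Set $\theta:=\norm{\Sigma_S/\Sigma_T}_\infty$, which is $<1$ under the standing assumption $\Sigma_S<\Sigma_T$ uniformly in space (the same condition that makes SI converge), and abbreviate the errors by $e_m^{(n)}:=\tpsi_m^{(n)}-\psi_m^{(n)}$, $f_m^{(n)}:=\tphi_m^{(n)}-\phi_m^{(n)}$, together with the weighted second moments $V_n^\psi:=\E\left[\sum_{m\in V}\omega_m\norm{\sqrt{\Sigma_T}e_m^{(n)}}_2^2\right]$ and $V_n^\phi:=\E\left[\sum_{m\in V}\omega_m\norm{\sqrt{\Sigma_T}f_m^{(n)}}_2^2\right]$; by unbiasedness (Theorem~\ref{expectation}) these are exactly the two quantities in the statement. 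The asserted bound is precisely what the pair of one-step inequalities $V_n^\phi\le\theta^2 V_{n-1}^\phi+\mathcal{M}$ and $V_n^\psi\le\theta^2 V_{n-1}^\phi$ produces after unrolling the recursion from $V_0^\phi=V_0$, so the whole proof reduces to these two inequalities. For the stochastic bookkeeping I would let $\F_{n-1}:=\sigma(V^{(0)},\dots,V^{(n-1)})$; in the analytic version of the scheme, where $\tphi_m^{(n-1)}$ and then $\tpsi_m^{(n)}$ are regarded as defined for all $m\in V$ by \eqref{1drsi} and \eqref{3deqpsisi}--\eqref{3dbcsi} (the restriction to the selected subset being only a lazy-evaluation shortcut), both $\tpsi_m^{(n)}$ and the probabilities $p_k^{(n)}$ (which depend only on $V^{(n-1)}$, and are strictly positive since $\omega_k,K_{km}>0$) are $\F_{n-1}$-measurable, and $V^{(n)}$ is the only fresh randomness in step $n$.

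\textbf{Transport half-step.} Subtracting \eqref{3deqpsisi}--\eqref{3dbcsi} for SI from those for RSI, $e_m^{(n)}$ solves $\Omega_m\cdot\nabla e_m^{(n)}+\Sigma_T e_m^{(n)}=\Sigma_S f_m^{(n-1)}$ with zero inflow data. Testing with $e_m^{(n)}$ and integrating over $\D$, the streaming term equals $\tfrac12\int_{\pt\D}(\Omega_m\cdot\bn)(e_m^{(n)})^2\,\dd S\ge 0$ — it vanishes on the inflow part $\Omega_m\cdot\bn<0$, where $e_m^{(n)}=0$, and is nonnegative on the outflow part — so $\int_\D\Sigma_T(e_m^{(n)})^2\,\dd\br\le\int_\D\Sigma_S f_m^{(n-1)}e_m^{(n)}\,\dd\br$; a weighted Cauchy--Schwarz then yields the pathwise contraction $\norm{\sqrt{\Sigma_T}e_m^{(n)}}_2\le\theta\norm{\sqrt{\Sigma_T}f_m^{(n-1)}}_2$ for every $m$ and every realization. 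Squaring, weighting by $\omega_m$, summing over $m$ and taking $\E$ gives $V_n^\psi\le\theta^2 V_{n-1}^\phi$; the case $n=0$ is trivial since $\tpsi_m^{(0)}=\psi_m^{(0)}$, hence $V_0^\psi=0$.

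\textbf{Scattering half-step.} Introduce the $\F_{n-1}$-measurable ``centred'' source $\hphi_m^{(n)}:=\sum_{k\in V}\omega_k K_{km}\tpsi_k^{(n)}$. Since $\P(k\in V^{(n)}\mid V^{(n-1)})=p_k^{(n)}$ and $p_k^{(n)}>0$, we have $\E[\I_{k\in V^{(n)}}/p_k^{(n)}\mid\F_{n-1}]=1$, hence $\E[\tphi_m^{(n)}\mid\F_{n-1}]=\hphi_m^{(n)}$; conditioning on $\F_{n-1}$ therefore kills the cross term in $f_m^{(n)}=(\tphi_m^{(n)}-\hphi_m^{(n)})+(\hphi_m^{(n)}-\phi_m^{(n)})$ in the $\Sigma_T$-weighted $L^2$ inner product, giving $V_n^\phi=\sum_m\omega_m\E\norm{\sqrt{\Sigma_T}(\tphi_m^{(n)}-\hphi_m^{(n)})}_2^2+\sum_m\omega_m\E\norm{\sqrt{\Sigma_T}(\hphi_m^{(n)}-\phi_m^{(n)})}_2^2$. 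In the second (``bias propagation'') sum, $\hphi_m^{(n)}-\phi_m^{(n)}=\sum_k\omega_k K_{km}e_k^{(n)}$ is a convex combination by $\sum_k\omega_k K_{km}=1$ from \eqref{assK}, so Jensen's inequality pointwise in $\br$ followed by $\sum_m\omega_m K_{km}=1$ collapses it to at most $\sum_k\omega_k\norm{\sqrt{\Sigma_T}e_k^{(n)}}_2^2$, whose expectation is $\le V_n^\psi$. In the first (``sampling noise'') sum, the tower property rewrites $\E\norm{\sqrt{\Sigma_T}(\tphi_m^{(n)}-\hphi_m^{(n)})}_2^2=\E\left[\int_\D\Sigma_T\,\mathrm{Var}(\tphi_m^{(n)}(\br)\mid\F_{n-1})\,\dd\br\right]$; writing $\tphi_m^{(n)}=\sum_{g=1}^G Y_g$ with $Y_g:=\sum_{k\in V_g}\omega_k K_{km}\,\I_{k\in V^{(n)}}\tpsi_k^{(n)}/p_k^{(n)}$, the conditional independence of the $G$ per-group draws gives $\mathrm{Var}(\tphi_m^{(n)}(\br)\mid\F_{n-1})=\sum_g\mathrm{Var}(Y_g(\br)\mid\F_{n-1})$, and the elementary identity $\sum_k b_k^2/p_k-(\sum_k b_k)^2=\tfrac12\sum_{k,k'}(\sqrt{p_{k'}/p_k}\,b_k-\sqrt{p_k/p_{k'}}\,b_{k'})^2$, applied within each $V_g$ with $\sum_{k\in V_g}p_k^{(n)}=1$ (from \eqref{qVg}) and $b_k=\omega_k K_{km}\tpsi_k^{(n)}(\br)$, identifies this with exactly the integrand defining $\mathcal{M}_m^{(n)}$ in \eqref{assM}. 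Hence the first sum equals $\sum_m\omega_m\mathcal{M}_m^{(n)}\le\mathcal{M}\sum_m\omega_m=\mathcal{M}$, and altogether $V_n^\phi\le V_n^\psi+\mathcal{M}\le\theta^2 V_{n-1}^\phi+\mathcal{M}$.

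\textbf{Conclusion and the main obstacle.} Unrolling $V_n^\phi\le\theta^2 V_{n-1}^\phi+\mathcal{M}$ from $V_0^\phi=V_0$ yields $V_n^\phi\le\theta^{2n}V_0+\tfrac{1-\theta^{2n}}{1-\theta^2}\mathcal{M}$, and then $V_n^\psi\le\theta^2 V_{n-1}^\phi$ is dominated by the same expression (since $\theta^2\le 1$); because $\theta<1$, both are uniformly bounded in $n$ by $V_0+\mathcal{M}/(1-\theta^2)$. I expect the only genuinely delicate part to be the scattering half-step: one must track carefully what is $\F_{n-1}$-measurable at the moment $V^{(n)}$ is drawn, use the independence across the $G$ groups to make the conditional variance additive, and then match the per-group conditional variance to the precise symmetrised form appearing in \eqref{assM}. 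The transport half-step (a standard energy estimate for the streaming operator) and the final induction are routine.
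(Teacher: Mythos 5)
Your proof is correct and follows essentially the same route as the paper: a one-step energy (a priori) estimate for the transport sweep giving the factor $\norm{\Sigma_S/\Sigma_T}_\infty^2$, a conditional second-moment computation for the resampled source that isolates the per-group sampling variance as exactly the symmetrised quantity in \eqref{assM} plus a Jensen-controlled bias term, and an unrolled recursion. Your bias--variance split around the centred source $\hphi_m^{(n)}=\E[\tphi_m^{(n)}\mid\F_{n-1}]$ is just a cleaner packaging of the paper's direct expansion of $\E[(\tphi_m^{(n)})^2]$ via the case analysis of $\E[\I_{k,k'\in V^{(n)}}\mid\F^{(n-1)}]$, so there is no substantive difference.
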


Let us now discuss the variance. Assuming that $K_{km}=O(1)$ and $\omega_m\sim 1/M$, one can deduce that $\mathcal{M}$ is of the order of $G * |M/G|^2 * 1/M^2$, which simplifies to $O(1/G)$. If, moreover, $\tpsi_k^{(n)}$ depends smoothly on $\Omega_k$, then the variance can be further reduced. This reduction is due to the fact that
$\displaystyle\tpsi_k^{(n)}-\tpsi_{k'}^{(n)}\sim |\Omega_{k'}-\Omega_k|$. This implies that the variance is small when using a relatively large $G$. However, in practice, choosing 
$G=1$ is often a better option because the implementation is significantly simpler. Though $\mathcal{M}=O(1)$ for $G=1$, one may use parallel computation or the time average of RSI (a consequence of the following theorem) to improve the accuracy.

% Although $\mathcal{M}=O(1)$ for $G=1$, one might employ parallel computation or the time average of RSI—(which is a consequence of the Birkhoff ergodic theorem)—to enhance accuracy."

\begin{theorem}(ergodicity)\label{convergenceoflow}
Suppose there are two positive constants, $0<\alpha\le \beta<\infty$ such that the discrete transition kernel $K_{km}$ satisfies $\alpha\le K_{km}\le \beta$, for $\forall k, m\in V$. Assume that 
\[
\frac{\beta}{\alpha}\norm{\frac{\Sigma_S}{\Sigma_T}}_{\infty}<1.
\]
Then, the following conditions hold:
\begin{enumerate}
\item There exists a unique invariant measure $\pi \in \mathcal{P}(\mathcal{S})$ for the RSI method. The marginal distribution $\pi_{\psi}$ in $S_{\psi}$ has a finite first moment, and this measure has the property that
\begin{equation}\label{eq:meanandvarofpsimarginal}
\E_{\pi_{\psi}}(\tpsi_m)=\E_{\pi}(\tpsi_m)=\psi_m, \quad \mathrm{Var}_{\pi_{\psi}}(\tpsi)
\le \frac{\mathcal{M}}{1-\norm{\frac{\Sigma_S}{\Sigma_T}}}_{\infty}.
\end{equation}
Here $(\psi_m)_{m\in V}$ is the exact solution of the DOM, $\E_{\pi_{\psi}}$ and $\mathrm{Var}_{\pi_{\psi}}$ represent the mean and variance under the marginal $\pi_{\psi}=\int_{\mathcal{S}_V}\pi(dv,\cdot)$ of the invariant measure $\pi$ respectively. 

\item  Let $\mu^n$ be the law of the states at the $n$th iteration for $(V^{(n)}, \bm{\tilde{\psi}})$ in $\mathcal{S}$. Then, there exists a constant $C>0$
and $\rho\in (0, 1)$ such that
\begin{equation}\label{eq:convpsimarginal}
W_1(\mu^n_{\psi}, \pi_{\psi}) \le C\rho^n.
\end{equation}
Moreover, the average of the states generated by the RSI method converges to the exact solution of the DOM. In particular, as $N\to\infty$,
\begin{equation}
\frac{1}{N}\sum_{n=1}^N \bm{\tpsi}^{(n)} \to \bm{\psi}:=(\psi_1, \cdots, \psi_M)
\end{equation}
almost surely in $\mathcal{S}_{\psi}$ under the metric $\mathrm{dist}(\cdot, \cdot)$.
\end{enumerate}
\end{theorem}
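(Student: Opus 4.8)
The plan is to treat the RSI as the Markov chain $X^{(n)}=(V^{(n)},\bm{\tpsi}^{(n)})$ on $\mathcal{S}=\mathcal{S}_V\times\mathcal{S}_{\psi}$ introduced above and to run a Harris-type ergodicity argument adapted to the infinite-dimensional factor $\mathcal{S}_{\psi}$ --- a Lyapunov drift together with a $W_1$-contraction on sublevel sets, in the spirit of the generalized Harris theorem of Hairer--Mattingly--Scheutzow and Kulik--Scheutzow --- exploiting that the ordinate-selection component $V^{(n)}$ is an \emph{autonomous} chain on the finite set $\mathcal{S}_V$. Two estimates are used repeatedly. (i) The energy identity for the one-direction transport equation with inflow data gives, for two RSI trajectories, $\norm{\sqrt{\Sigma_T}(\tpsi_{m,1}^{(n)}-\tpsi_{m,2}^{(n)})}_2\le\norm{\frac{\Sigma_S}{\Sigma_T}}_{\infty}\norm{\sqrt{\Sigma_T}(\tphi_{m,1}^{(n-1)}-\tphi_{m,2}^{(n-1)})}_2$ and $\norm{\sqrt{\Sigma_T}\tpsi_m^{(n)}}_2\le\norm{\frac{\Sigma_S}{\Sigma_T}}_{\infty}\norm{\sqrt{\Sigma_T}\tphi_m^{(n-1)}}_2+C_Q$, the constant $C_Q$ depending only on $Q$ and the boundary data; combined with the normalization $\sum_m\omega_m K_{km}=1$ from \eqref{assK} this makes $\omega$-weighted sums over $m$ telescope cleanly. (ii) From \eqref{qkn}--\eqref{cmn} and $\alpha\le K_{km}\le\beta$ one gets the two-sided bound $\frac{\alpha}{\beta}\frac{\omega_k}{\omega(V(k))}\le p_k^{(n)}\le\frac{\beta}{\alpha}\frac{\omega_k}{\omega(V(k))}$ (writing $\omega(V(k))=\sum_{j\in V(k)}\omega_j$), uniformly in $n$ and in the previous state; in particular every transition probability of $V^{(n)}$ on $\mathcal{S}_V$ is bounded below by a fixed $\delta>0$.

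\textbf{The $V$-chain and a Lyapunov function.} By (ii), $V^{(n)}$ is a Doeblin chain: it has a unique invariant law $\nu$, mixes geometrically, and two copies can be coupled so that $\mathbb{P}(\text{not coalesced by time }n)\le\rho_1^n$ for some $\rho_1\in(0,1)$ (after coalescence one runs the two $V$-chains synchronously, which is legitimate since the law of $V^{(n+1)}$ depends only on $V^{(n)}$). For the $\tpsi$-part, the decisive fact --- the same cancellation that underlies Theorem \ref{expectation} --- is $\E[\tq_k^{(n)}\I_{k\in V^{(n)}}\mid V^{(n-1)}]=1$ for every $k$, i.e.\ the $1/p_k^{(n)}$ reweighting in \eqref{1drsi} exactly undoes the selection probability once summed against $\omega_k$. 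Using this with (i) one obtains the drift inequality $\E[\mathcal{L}(X^{(n+1)})\mid X^{(n)}]\le\norm{\frac{\Sigma_S}{\Sigma_T}}_{\infty}\mathcal{L}(X^{(n)})+K$ for $\mathcal{L}(v,\bm{\psi}):=\sum_m\omega_m\norm{\sqrt{\Sigma_T}\tpsi_m}_2$, and since $\norm{\frac{\Sigma_S}{\Sigma_T}}_{\infty}<1$ (implied by the hypothesis, as $\beta\ge\alpha$), $\mathcal{L}$ is a Lyapunov function with ``small'' sublevel sets. Running the analogous computation at the level of second moments, together with the square-completion that defines $\mathcal{M}$ in Theorem \ref{variance}, yields a uniform-in-$n$ bound on $\E[\mathcal{L}(X^{(n)})^2]$ (this is where finiteness of $\mathcal{M}$, hence $K_{km}\ge\alpha>0$, is used).

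\textbf{Contraction on small sets and identification of $\pi$.} Take two starting points in $\{\mathcal{L}\le R\}$ and couple the $V$-parts as above. \emph{After} the $V$-chains have coalesced their selection probabilities agree, so $\tphi_{m,1}^{(n)}-\tphi_{m,2}^{(n)}=\sum_{k\in V^{(n)}}\omega_k K_{km}\tq_k^{(n)}(\tpsi_{k,1}^{(n)}-\tpsi_{k,2}^{(n)})$ is a genuine \emph{difference}, and combining the triangle inequality, (i), $\sum_m\omega_m K_{km}=1$, and $\E[\tq_k^{(n)}\I_{k\in V^{(n)}}\mid\cdot]=1$ gives the clean recursion $\E[\Phi^{(n+1)}\mid\mathcal{F}_n]\le\norm{\frac{\Sigma_S}{\Sigma_T}}_{\infty}\Phi^{(n)}$ with $\Phi^{(n)}:=\sum_m\omega_m\norm{\sqrt{\Sigma_T}(\tphi_{m,1}^{(n)}-\tphi_{m,2}^{(n)})}_2$, hence $\mathrm{dist}(\bm{\tpsi}_1^{(n+1)},\bm{\tpsi}_2^{(n+1)})\le\norm{\frac{\Sigma_S}{\Sigma_T}}_{\infty}\Phi^{(n)}$. \emph{Before} coalescence the two chains select different ordinates and the discrepancy can be bounded only through the individual magnitudes; its worst-case one-step amplification is governed by the product of the transport contraction $\norm{\frac{\Sigma_S}{\Sigma_T}}_{\infty}$ and the largest possible source reweighting $1/p_k\le\frac{\beta}{\alpha}\frac{\omega(V(k))}{\omega_k}$. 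It is exactly here that the hypothesis $\frac{\beta}{\alpha}\norm{\frac{\Sigma_S}{\Sigma_T}}_{\infty}<1$ enters: it keeps this amplification sub-unital, so the pre-coalescence phase cannot erase the gain made afterwards, and summing over the geometrically distributed coalescence time (through Cauchy--Schwarz and the uniform second-moment bound) produces an honest $\ell$-step contraction $W_1(\delta_x P^{\ell},\delta_y P^{\ell})\le(1-\kappa)\cdot(\text{initial }W_1)$ on $\{\mathcal{L}\le R\}$. The generalized Harris theorem then delivers a unique invariant measure $\pi$ with $\int\mathcal{L}\,d\pi<\infty$ (so $\pi_{\psi}$ has a finite first moment) and a rate $\rho\in(0,1)$ with $W_1(\mu^n_{\psi},\pi_{\psi})\le C\rho^n$. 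The moments are identified using Theorem \ref{expectation}: $\E[\tpsi_m^{(n)}]=\psi_m^{(n)}$ for all $n$ and all initial laws; the left side tends to $\E_{\pi_{\psi}}[\tpsi_m]$ in $L^2(\D;\Sigma_T)$ because $\norm{\sqrt{\Sigma_T}(\E_{\mu^n_{\psi}}[\tpsi_m]-\E_{\pi_{\psi}}[\tpsi_m])}_2\le\omega_m^{-1}W_1(\mu^n_{\psi},\pi_{\psi})\to0$, while the right side tends to the DOM solution $\psi_m$ since SI converges under $\norm{\frac{\Sigma_S}{\Sigma_T}}_{\infty}<1$; hence $\E_{\pi_{\psi}}[\tpsi_m]=\psi_m$, and the uniform variance estimate of Theorem \ref{variance} passes to the limit to give the stated bound on $\mathrm{Var}_{\pi_{\psi}}(\tpsi)$.

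\textbf{Time averages and the main obstacle.} For the almost-sure convergence of $\frac{1}{N}\sum_{n=1}^N\bm{\tpsi}^{(n)}$, start the chain from $\pi$: the process is stationary and, $\pi$ being the unique invariant measure, ergodic, so the Banach-valued pointwise ergodic (Birkhoff) theorem applied coordinate-wise --- each $\tpsi_m$ is $\pi$-Bochner-integrable by the first-moment bound --- gives $\frac{1}{N}\sum_{n=1}^N\tpsi_m^{(n)}\to\E_{\pi}[\tpsi_m]=\psi_m$ in $L^2(\D;\Sigma_T)$ for each $m$, hence $\frac{1}{N}\sum_{n=1}^N\bm{\tpsi}^{(n)}\to\bm{\psi}$ a.s.\ in $\mathcal{S}_{\psi}$ under $\mathrm{dist}$; for an arbitrary initial law one couples to the stationary chain, whose coupling time is a.s.\ finite with geometric tails, so the finitely many discrepant terms wash out of the Cesàro average and the limit is unchanged. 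The main difficulty is the small-set $W_1$-contraction of the previous paragraph: one must quantify how far two coupled RSI trajectories can separate while their selected ordinate sets still disagree, and verify that $\frac{\beta}{\alpha}\norm{\frac{\Sigma_S}{\Sigma_T}}_{\infty}<1$ makes that separation contractive so that it is dominated by the geometric coalescence of the $V$-chains. A related subtlety is that, $\mathcal{S}_{\psi}$ being infinite-dimensional, a naive total-variation/Doeblin argument on the full chain is unavailable (the $\tpsi$-laws from distinct starting points are mutually singular), which is precisely why one must work with $W_1$ and the generalized Harris framework --- and the contraction one obtains there is also what lets the unbiasedness and variance estimates of Theorems \ref{expectation}--\ref{variance} be carried over to the stationary regime.
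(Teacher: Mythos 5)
Your proposal is correct, and its engine is the same as the paper's: the ordinate chain $V^{(n)}$ is autonomous and Doeblin on the finite set $\mathcal{S}_V$ (with minorization constant bounded below via $(\alpha/\beta)^G$), two copies are coupled to coalesce geometrically fast, Lemma \ref{lmm:contract} gives the clean $\norm{\Sigma_S/\Sigma_T}_{\infty}$-contraction of $\mathrm{dist}(\bm{\tpsi}_1,\bm{\tpsi}_2)$ after coalescence, and the hypothesis $\frac{\beta}{\alpha}\norm{\Sigma_S/\Sigma_T}_{\infty}<1$ is used exactly where you use it --- to control the trajectories during the pre-coalescence phase, since $\sum_{k\in V^{(n)}}\omega_k K_{km}\tq_k^{(n)}\le\beta/\alpha$ is the worst-case reweighting. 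The only substantive difference is packaging: the paper's Lemma \ref{lmm:boundness} turns that reweighting bound into an \emph{almost-sure} uniform bound $\sup_{n,m}\norm{\sqrt{\Sigma_T}\tpsi_m^{(n)}}_2\le C$, so the pre-coalescence discrepancy is bounded by a deterministic constant and the geometric sum over the coalescence time needs no Cauchy--Schwarz; it then gets the invariant measure by a direct Cauchy-sequence argument in $W_1$ rather than by invoking the generalized Harris theorem with a Lyapunov function and second-moment bounds as you do. Both routes are valid; the paper's a.s. bound is slightly more elementary, while your Harris framing makes explicit that only a drift-plus-small-set-contraction structure is needed, and your treatment of the Birkhoff step for a non-stationary initial law (coupling to the stationary chain so the finitely many discrepant terms wash out of the Ces\`aro average) is actually more careful than the paper's one-line citation.
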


 \section{Proof of the Theorems}\label{sec:proof}
\subsection{Notations and apriori estimate}
    We introduce the following notations. For any $\br\in \D$ and given $\Omega_m$, let $\br=\br_{m}^{0}+s\Omega_m$, where $\br_{m}^{0}\in \pt\D$, $s\in \R^+$.
    Define $\br_m(s')=\br_{m}^{0}+s'\Omega_m$. Then \eqref{3deqpsisi} can be rewritten for $0< s'<s$ as
    \eq$
    \frac{\dd}{\dd s'}\psi_m^{(n)}(\br_m(s'))+\Sigma_T(\br_m(s'))\psi_m^{(n)}(\br_m(s'))=\Sigma_S(\br_m(s'))\phi_m^{(n-1)}(\br_m(s'))+Q(\br_m(s')).
    $
    The solution of \eqref{3deqpsisi}-\eqref{3dbcsi} can then be expressed by
  \begin{equation}\label{eq:solutionpsi}
    \begin{aligned}
        \psi_m^{(n)}(\br)=&e^{-\int_0^s \Sigma_T(\br_m(s')))\dd s'}
    \psi_{\pt\D}^-(\br_{m}^{0},\Omega_m)\\
    &+\int_0^s\left[\Sigma_S(\br_{m}(s'))\phi_m^{(n-1)}(\br_{m}(s'))+Q(\br_{m}(s'))\right]e^{-\int_{s'}^{s}\Sigma_T(\br_{m}(s''))\dd s^{\prime \prime}}\dd s'.
    \end{aligned}
 \end{equation}
 %    Hence when cross sections $\Sigma_T$, $\Sigma_S$, boundary conditions $\psi_{\pt\D}^-$ and source term $Q$ are fixed, for $\forall m \in V$, we can define an affine map $\Gamma_m:L^2(\D)\to L^2(\D)$ such that
 %    \eq$ \psi_m^{(n)}(\br)=\Gamma_m \phi_m^{(n-1)}(\br).$
	% Then in RSI, we also have 
	% \eq$\tpsi_m^{(n)}(\br)=\Gamma_m \tphi_m^{(n-1)}(\br),\quad m\in V.$
   
We show that the solution of transport sweep can be stably solved. 
    \begin{lemma}\label{aolemmaA1}
		For each $m\in V=\{1,2,\cdots,M\}$, if $\psi_m(\br)$ is the solution of the system 
\begin{subequations}	\label{3deq}	
  \begin{numcases}{}
			&$\Omega_m \cdot \nabla \psi_m(\br)+\Sigma_T(\br)\psi_m(\br)=\Sigma_S(\br)\phi_m(\br)+Q(\br)$, \label{3deqpsi0}\\
			&$\psi_m(\br)=\psi_{\Gamma}^{-}(\br),\quad \br \in \pt \D, \quad \Omega_m\cdot \bn_{\br}<0$, \label{3dbc01} 
		\end{numcases}
  \end{subequations}
then $\psi_m(\br)$ satisfies the following a priori estimate:  
	\eq$\label{1dape0}\norm{\sqrt{\Sigma_T}\psi_m}_2\leq \norm{\frac{\Sigma_S}{\Sigma_T}}_{\infty} \norm{\sqrt{\Sigma_T}\phi_m}_2
 +\norm{\frac{Q}{\sqrt{\Sigma_T}}}_2+\frac{1}{\sqrt{2}}\norm{\psi_{\Gamma}^{-}(\br)}_{L^2(\pt\D_m^{-})},$
where $\pt\D_m^{-}=\{\br\in \pt\D: \Omega_m\cdot \bn_{\br}<0\}$.		
\end{lemma}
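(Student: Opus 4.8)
The natural route is a standard $L^2$ energy estimate for the transport sweep. I would multiply \eqref{3deqpsi0} by $\psi_m$ and integrate over $\D$, writing the advection term as a divergence: $(\Omega_m\cdot\nabla\psi_m)\psi_m=\tfrac12\,\Omega_m\cdot\nabla(\psi_m^2)$. The divergence theorem on the (Lipschitz, since convex) domain $\D$ then turns this into the boundary integral $\tfrac12\int_{\pt\D}(\Omega_m\cdot\bn_\br)\psi_m^2\,dS$. I would split $\pt\D$ into the outflow part $\pt\D_m^{+}=\{\Omega_m\cdot\bn_\br>0\}$ and the inflow part $\pt\D_m^{-}$: the outflow contribution is nonnegative and can be discarded, while on $\pt\D_m^{-}$ we substitute the boundary datum $\psi_m=\psi_\Gamma^-$ and use $|\Omega_m\cdot\bn_\br|\le|\Omega_m|\,|\bn_\br|=1$. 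This yields
\begin{equation}
\norm{\sqrt{\Sigma_T}\psi_m}_2^2\;\le\;\int_\D(\Sigma_S\phi_m+Q)\psi_m\,\dd\br\;+\;\tfrac12\norm{\psi_\Gamma^-}_{L^2(\pt\D_m^-)}^2.
\end{equation}

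\textbf{Bounding the right-hand side.} For the first term on the right I would apply Cauchy--Schwarz after inserting weights: $\int_\D\Sigma_S\phi_m\psi_m=\int_\D\big(\tfrac{\Sigma_S}{\Sigma_T}\sqrt{\Sigma_T}\phi_m\big)\big(\sqrt{\Sigma_T}\psi_m\big)\le\norm{\tfrac{\Sigma_S}{\Sigma_T}}_\infty\norm{\sqrt{\Sigma_T}\phi_m}_2\norm{\sqrt{\Sigma_T}\psi_m}_2$, and similarly $\int_\D Q\psi_m=\int_\D\big(\tfrac{Q}{\sqrt{\Sigma_T}}\big)\big(\sqrt{\Sigma_T}\psi_m\big)\le\norm{\tfrac{Q}{\sqrt{\Sigma_T}}}_2\norm{\sqrt{\Sigma_T}\psi_m}_2$. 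Writing $X=\norm{\sqrt{\Sigma_T}\psi_m}_2$, $A=\norm{\tfrac{\Sigma_S}{\Sigma_T}}_\infty\norm{\sqrt{\Sigma_T}\phi_m}_2+\norm{\tfrac{Q}{\sqrt{\Sigma_T}}}_2$, and $B=\tfrac12\norm{\psi_\Gamma^-}_{L^2(\pt\D_m^-)}^2$, the estimate reads $X^2\le AX+B$. Solving this quadratic inequality gives $X\le\tfrac12\big(A+\sqrt{A^2+4B}\big)\le A+\sqrt{B}$, where the last step uses the subadditivity $\sqrt{A^2+4B}\le A+2\sqrt B$. Since $\sqrt B=\tfrac{1}{\sqrt2}\norm{\psi_\Gamma^-}_{L^2(\pt\D_m^-)}$, this is exactly \eqref{1dape0}.

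\textbf{Main obstacle.} The only genuinely delicate point is justifying the integration by parts, i.e., the existence and meaning of the boundary trace of $\psi_m$ on $\pt\D$. The sweep solution given by the characteristic formula \eqref{eq:solutionpsi} only has ``transport regularity'' ($\psi_m\in L^2(\D)$ with $\Omega_m\cdot\nabla\psi_m\in L^2(\D)$), so the divergence identity and the inflow/outflow splitting must be invoked in the weighted trace sense for this space (equivalently, verified directly on the characteristics $s'\mapsto\psi_m(\br_m(s'))$ and then integrated over the incoming face), followed by a density argument from smooth functions. Everything else -- the Cauchy--Schwarz bounds, discarding the outflow term, and the quadratic inequality -- is routine. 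One should also note in passing that the argument is uniform in $m$, as required, because the only $m$-dependence entering the final bound is through $\phi_m$, $Q$, and $\psi_\Gamma^-$.
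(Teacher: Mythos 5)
Your proof is correct and yields exactly the stated constants. The core computation is identical to the paper's: multiply by $\psi_m$, integrate, use the divergence theorem to isolate the boundary term, discard the nonnegative outflow contribution, bound the inflow contribution by $\tfrac12\|\psi_\Gamma^-\|_{L^2(\pt\D_m^-)}^2$, and apply weighted Cauchy--Schwarz to the source terms. The difference is in how the three contributions are combined. The paper first splits $\psi_m=\psi_m^{1\ast}+\psi_m^{2\ast}+\psi_m^{3\ast}$ by superposition (scattering source only, external source only, boundary datum only), runs the energy identity on each piece separately --- so that each subproblem yields a clean linear bound after dividing by $\|\sqrt{\Sigma_T}\psi_m^{j\ast}\|_2$ --- and then sums via the triangle inequality. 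You instead keep $\psi_m$ intact, arrive at the quadratic inequality $X^2\le AX+B$, and close it with $X\le\tfrac12\bigl(A+\sqrt{A^2+4B}\bigr)\le A+\sqrt{B}$; this last step is valid since $\sqrt{A^2+4B}\le A+2\sqrt{B}$ for $A,B\ge 0$, and $\sqrt{B}=\tfrac{1}{\sqrt2}\|\psi_\Gamma^-\|_{L^2(\pt\D_m^-)}$ reproduces the paper's boundary constant. Both routes give the same estimate; the decomposition avoids the quadratic manipulation at the cost of three subproblems, while your version is a single computation. Your remark on trace regularity for functions with only transport regularity is a legitimate technical point that the paper passes over silently, and your observation that the bound is uniform in $m$ is correct and is what the later lemmas rely on.
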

\begin{proof}

By the linearity of the equation, we can decompose the solution into three parts
\[
\psi_m=\psi_m^{1\ast}+\psi_m^{2\ast}+\psi_m^{3\ast},
\]
where $\psi_m^{1\ast}$ is the solution to \eqref{3deq} with $Q(\br)=0$ and $\psi_{\Gamma}^{-}(\br)=0$, $\psi_m^{2\ast}$ is the solution to \eqref{3deq} with $\phi_m(\br)=0$ and $\psi_{\Gamma}^{-}(\br)=0$, while $\psi_m^{3\ast}$ is the solution to \eqref{3deq} with both $Q(\br)=0$, $\phi_m(\br)=0$ and with nonzero boundary term.
Then,
\[
\norm{\sqrt{\Sigma_T}\psi_m}_2\leq \sum_{j=1}^3 \norm{\sqrt{\Sigma_T}\psi_m^{j\ast}}_2.
\]

For $\psi_m^{1\ast}$, multiplying both sides of \eqref{3deqpsi0} by $\psi^{1\ast}_m(\br)$ and integrating on $\D$, we obtain 
\begin{multline}
\int_{\D}\Sigma_S(\br)\phi_m(\br)\psi_m^{1\ast}(\br)\dd \br-\int_{\D}\Sigma_T(\br)(\psi_m^{1\ast})^2(\br)\dd \br=\frac{1}{2}\int_{\D}\Omega_m \cdot \nabla (\psi_m^{1\ast})^2(\br)\dd \br\\
= \frac{1}{2}\int_{\pt \D} \left(\Omega_m \cdot \bn_{\br}\right) (\psi_m^{1\ast})^2(\br) \dd S_{\br} \geq 0.
\end{multline}
The $L^2$ norm of $\sqrt{\Sigma_T}\psi_m^{1\ast}$ is controlled by 
		$$
\begin{aligned}
\norm{\sqrt{\Sigma_T}(\psi_m^{1\ast})}_2^2
    \leq \int_{\D}\Sigma_S(\br)\phi_m(\br)\psi_m^{1\ast}(\br)\dd \br
    \leq \norm{\frac{\Sigma_S}{\Sigma_T}}_{\infty}\norm{\sqrt{\Sigma_T}\psi_m^{1\ast}}_2\norm{\sqrt{\Sigma_T}\phi_m}_2,
\end{aligned}$$
The estimate for $\psi_m^{2\ast}$ is similar and we omit the details.

For $\psi_m^{3\ast}$, similar computation gives
\begin{multline}
\int_{\D}\Sigma_T(\br)(\psi_m^{3\ast})^2(\br)\dd \br= -\frac{1}{2}\int_{\pt \D^+} \left(\Omega_m \cdot \bn_{\br}\right) (\psi_m^{3\ast})^2(\br) \dd S_{\br}\\
-\frac{1}{2}\int_{\pt \D^-} \left(\Omega_m \cdot \bn_{\br}\right) (\psi_{\Gamma}^-)^2(\br) \dd S_{\br} \le \frac{1}{2}\int_{\pt \D^-}  (\psi_{\Gamma}^-)^2(\br) \dd S_{\br}.
\end{multline}
Then \eqref{1dape0} follows.
\end{proof}

Now, we move to RSI. We introduce filtration to clarify the dependence of $\tpsi_m^{(n)}$, $\tphi_m^{(n)}$ on $V^{(n)}$. Define the filtration $\{\F^{(n)}\}_{n\geq 0}$ by
	\eq$\F^{(n)}=\sigma\left(V^{(n')},0\le n'\leq n\right).$
	Then $\tq_{k}^{(n)}$ in \eqref{deftq}, $\tphi_m^{(n-1)}, \tpsi_m^{(n)}\in \F^{(n-1)}$.
 We emphasize that for given $\tpsi_k^{(n-1)}$ with $k\in V^{(n-1)}$ $\tpsi_m^{(n)}$ is defined for all $m\in V$, though in practical simulations only those in $V^{(n)}$ would be used.	

The following contraction result will be useful when studying the ergodicity properties.
\begin{lemma}[Contraction]\label{lmm:contract}
Suppose $\{\tpsi_{m,1}^{(n)}\}_{m\in V}$ and $\{\tpsi_{m,2}^{(n)}\}_{m\in V}$ are two realization of RSI for solving the same system \eqref{3ddom} with different $\{\psi_m^{(0)}\}_{m\in V}$, and they share the same $V^{(n)}$ in each iteration, then we have
		\eq$\sum_{m\in V}\omega_m\E\left[ \norm{\sqrt{\Sigma_T}\left(\tpsi_{m,1}^{(n+1)}-\tpsi_{m,2}^{(n+1)}\right)}_2\right]\leq \norm{\frac{\Sigma_S}{\Sigma_T}}_{\infty} \sum_{m\in V}\omega_m\E\left[\norm{\sqrt{\Sigma_T}\left(\tpsi_{m,1}^{(n)}-\tpsi_{m,2}^{(n)}\right)}_2\right].\label{eqcontraction}$
\end{lemma}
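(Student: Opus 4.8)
The plan is to use linearity together with the a priori estimate of Lemma~\ref{aolemmaA1}. Set $\delta\tpsi_m^{(n)}:=\tpsi_{m,1}^{(n)}-\tpsi_{m,2}^{(n)}$ and $\delta\tphi_m^{(n)}:=\tphi_{m,1}^{(n)}-\tphi_{m,2}^{(n)}$, with $\delta\tpsi_m^{(n+1)}$ understood to be defined for all $m\in V$ as in the remark preceding the lemma. Subtracting the two copies of the transport sweep \eqref{3deqpsisi}--\eqref{3dbcsi} (same $Q$, same inflow data $\psi_\Gamma^-$, and by hypothesis the same $V^{(n)}$), we see that $\delta\tpsi_m^{(n+1)}$ solves the system \eqref{3deq} with scattering source $\delta\tphi_m^{(n)}$, with $Q\equiv 0$ and zero boundary term. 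Likewise, since both realizations share $V^{(n)}$, formula \eqref{1drsi} gives $\delta\tphi_m^{(n)}=\sum_{k\in V}\omega_k K_{km}\,\tq_k^{(n)}\,\I_{k\in V^{(n)}}\,\delta\tpsi_k^{(n)}$ for every $m\in V$ (for the first step $n=0$ one uses \eqref{tildephi0} instead, with $\tq_k^{(0)}$ replaced by the group size $|V(k)|$; the cancellation below is unchanged since the uniform selection probability is $1/|V(k)|$).

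First I would apply Lemma~\ref{aolemmaA1} to $\delta\tpsi_m^{(n+1)}$ with vanishing $Q$ and vanishing inflow data, which gives $\norm{\sqrt{\Sigma_T}\delta\tpsi_m^{(n+1)}}_2\le\norm{\frac{\Sigma_S}{\Sigma_T}}_\infty\norm{\sqrt{\Sigma_T}\delta\tphi_m^{(n)}}_2$ for each $m$. Then, bounding $\delta\tphi_m^{(n)}$ by the triangle inequality, multiplying by $\omega_m$, summing over $m\in V$, interchanging the order of summation, and invoking the conservation identity $\sum_{m\in V}\omega_m K_{km}=1$ from \eqref{assK} to collapse the $m$-sum, I obtain the pathwise bound
\[
\sum_{m\in V}\omega_m\norm{\sqrt{\Sigma_T}\delta\tpsi_m^{(n+1)}}_2\le\norm{\frac{\Sigma_S}{\Sigma_T}}_\infty\sum_{k\in V}\omega_k\,\tq_k^{(n)}\,\I_{k\in V^{(n)}}\norm{\sqrt{\Sigma_T}\delta\tpsi_k^{(n)}}_2.
\]

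The remaining step is to take expectations, which is the only place where some care is needed. Conditioning on $\F^{(n-1)}$ and using the measurability facts recorded just above the lemma — $\tpsi_k^{(n)}$, $\tq_k^{(n)}$ and $p_k^{(n)}$ are all $\F^{(n-1)}$-measurable — together with $\E[\I_{k\in V^{(n)}}\mid\F^{(n-1)}]=p_k^{(n)}$ from the selection rule \eqref{qkn}, the conditional expectation of the right-hand side equals $\norm{\frac{\Sigma_S}{\Sigma_T}}_\infty\sum_{k\in V}\omega_k\,\tq_k^{(n)}p_k^{(n)}\norm{\sqrt{\Sigma_T}\delta\tpsi_k^{(n)}}_2$. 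By the definition \eqref{deftq}, $\tq_k^{(n)}p_k^{(n)}=\I_{p_k^{(n)}\ne0}\le1$, which in particular disposes of the degenerate directions $p_k^{(n)}=0$ (not excluded by the hypotheses of this lemma), so this is at most $\norm{\frac{\Sigma_S}{\Sigma_T}}_\infty\sum_{k\in V}\omega_k\norm{\sqrt{\Sigma_T}\delta\tpsi_k^{(n)}}_2$; taking the outer expectation by the tower property yields \eqref{eqcontraction}. I do not expect a genuine obstacle here — the points requiring attention are simply (i) recognizing that $\delta\tpsi_m^{(n+1)}$ solves the source-free, boundary-free sweep, so that Lemma~\ref{aolemmaA1} supplies exactly the contraction factor $\norm{\Sigma_S/\Sigma_T}_\infty$, and (ii) the bookkeeping $\tq_k^{(n)}p_k^{(n)}\le1$ combined with conditioning on $\F^{(n-1)}$.
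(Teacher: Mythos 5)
Your proposal is correct and follows essentially the same route as the paper's proof: decompose the difference as a solution of the source-free, boundary-free sweep, apply Lemma~\ref{aolemmaA1} to extract the factor $\norm{\frac{\Sigma_S}{\Sigma_T}}_{\infty}$, use the triangle inequality, condition on $\F^{(n-1)}$ so that $\tq_k^{(n)}\E[\I_{k\in V^{(n)}}\mid\F^{(n-1)}]=\tq_k^{(n)}p_k^{(n)}\le 1$, and collapse the $m$-sum via $\sum_{m\in V}\omega_m K_{km}=1$. Your explicit handling of the degenerate directions with $p_k^{(n)}=0$ is a slightly more careful bookkeeping of a step the paper treats as an equality, but the argument is the same.
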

	\begin{proof}

		Note that $\tpsi_{m,1}^{(n+1)}-\tpsi_{m,2}^{(n+1)}$ is the solution to system \eqref{3deq} with the source term being $\tphi_{m,1}^{(n)}-\tphi_{m,2}^{(n)}$, and with $Q(\br) = 0$ and $\psi_{\Gamma}^{-}(\br) = 0$. Therefore, by Lemma \ref{aolemmaA1}, we obtain
\[ 
  \begin{split}
\Delta_m^{n+1}:= \norm{\sqrt{\Sigma_T}\left(\tpsi_{m,1}^{(n+1)}-\tpsi_{m,2}^{(n+1)}\right)}_2 
  & \leq \norm{\frac{\Sigma_S}{\Sigma_T}}_{\infty} \norm{\sqrt{\Sigma_T} \left(\tphi_{m,1}^{(n)}-\tphi_{m,2}^{(n)}\right)}_2 \\
  &=\norm{\frac{\Sigma_S}{\Sigma_T}}_{\infty} \norm{\sqrt{\Sigma_T}\sum_{k\in V^{(n)}}\omega_k K_{km} \tq_k^{(n)} \left(\tpsi_{k,1}^{(n)}-\tpsi_{k,2}^{(n)}\right)}_2 
  \end{split}
  \]
Take the expectation on both sides, we have
\[
\begin{aligned}
\E \Delta_m^{n+1}  %\leq&\norm{\frac{\Sigma_S}{\Sigma_T}}_{\infty}\E\left[\norm{\sqrt{\Sigma_T}\sum_{k\in V^{(n)}}\omega_k K_{km} \tq_k^{(n)} \left(\tpsi_{k,1}^{(n)}-\tpsi_{k,2}^{(n)}\right)}_2\right] \\
			\leq &  \norm{\frac{\Sigma_S}{\Sigma_T}}_{\infty}\E\left[\sum_{k\in V^{(n)}}\omega_k K_{km} \tq_k^{(n)} \norm{\sqrt{\Sigma_T}\left(\tpsi_{k,1}^{(n)}-\tpsi_{k,2}^{(n)}\right)}_2\right] \\
			= & \norm{\frac{\Sigma_S}{\Sigma_T}}_{\infty}\sum_{k\in V}\omega_k K_{km} \E\left[\norm{\sqrt{\Sigma_T}\left(\tpsi_{k,1}^{(n)}-\tpsi_{k,2}^{(n)}\right)}_2\E\left[\tq_k^{(n)}\I_{k\in V^{(n)}}\bigg|\F^{(n-1)}\right]\right] \\
			= &\norm{\frac{\Sigma_S}{\Sigma_T}}_{\infty}\sum_{k\in V}\omega_k K_{km} \E\left[\norm{\sqrt{\Sigma_T}\left(\tpsi_{k,1}^{(n)}-\tpsi_{k,2}^{(n)}\right)}_2 \right].
\end{aligned}
\]
Here, the first inequality follows from the triangle inequality.
Summing over all $m \in V$ and weighting by $\omega_m$, we obtain
\[ 
\begin{aligned}
\sum_{m\in V}\omega_m\E \Delta_m^{n+1}
& \leq  \sum_{m\in V}\omega_m \norm{\frac{\Sigma_S}{\Sigma_T}}_{\infty}\sum_{k\in V}\omega_k K_{km} \E\left[\norm{\sqrt{\Sigma_T}\left(\tpsi_{k,1}^{(n)}-\tpsi_{k,2}^{(n)}\right)}_2 \right] \\
&=  \norm{\frac{\Sigma_S}{\Sigma_T}}_{\infty} \sum_{k\in V}\left(\sum_{m\in V}\omega_m K_{km}\right)\omega_k \E\left[\norm{\sqrt{\Sigma_T}\left(\tpsi_{k,1}^{(n)}-\tpsi_{k,2}^{(n)}\right)}_2 \right].\\
\end{aligned}
\]
Thus, using \eqref{assK}, equation \eqref{eqcontraction} follows immediately.
\end{proof}

	\subsection{Proof of Theorem \ref{expectation}}\label{sec:prooftheorem}
	\begin{proof}
		We prove the theorem by induction. In the first step,  $\tpsi_m^{(0)}=\psi_m^{(0)}$ for $m\in V^{(0)}$. Then, for a given $k$, because $V^{(0)}$ is chosen randomly with uniform probability, the probability that the $k$th direction in group $V(k)$ is selected is $1/|V(k)|$. Thus, $$|V(k)|  \E\left[\tpsi_{k}^{(0)}\I_{k\in V^{(0)}}\right]=\tpsi_{k}^{(0)}=\psi_{k}^{(0)}(\br),$$ where $\I_{k\in V^{(0)}}$ is the characteristic function such that when $k\in V^{(0)}$, it is $1$, otherwise, it is $0$. From \eqref{tildephi0}, we have 
  \eq$\E\left[\tphi_m^{(0)}\right]= \sum_{k\in V}\omega_k K_{km}|V(k)|  \E\left[\tpsi_{k}^{(0)}\I_{k\in V^{(0)}}\right]=\sum_{k\in V}\omega_k K_{km}\psi_{k}^{(0)}(\br)=\phi_m^{(0)}.$  
%Since $\Gamma_m$ is an affine map, 
Consequently, since the source $Q(\br)$ and boundary data are deterministic, by \eqref{eq:solutionpsi}, one has
\begin{equation}\label{eq:unbiasfirststep}
\E\left[\tpsi_m^{(1)}\right]=%\E\left[\Gamma_m\tphi_m^{(0)}\right]=\Gamma_m\E\left[\tphi_m^{(0)}\right]=\Gamma_m\phi_m^{(0)}=
\psi_m^{(1)}.
\end{equation}
  
    Next, suppose we have proved $ \E\left[\tphi_m^{(n-1)}\right]=\phi_m^{(n-1)}$, $\E\left[\tpsi_m^{(n)}\right]=\psi_m^{(n)}$, we are going to prove $\E\left[\tphi_m^{(n)}\right]=\phi_m^{(n)}$, $\E\left[\tpsi_m^{(n+1)}\right]=\psi_m^{(n+1)}$.
	Due to the form of $\tq_{k}^{(n)}$ in \eqref{deftq} and the way to choose $V^{(n)}$ according to \eqref{qkn}, we have $p_k^{(n)}=\E\left[\I_{k\in V^{(n)}}\big|\F^{(n-1)}\right]$. Then, from \eqref{1drsi} and 
 $\tq_{k}^{(n)}, \tpsi_{k}^{(n)}\in \F^{(n-1)}$, 
		\begin{align*}
		    \E\left[\tphi_m^{(n)}\right]
			=&\sum_{k\in V}\omega_k K_{km} \E\left[\tq_{k}^{(n)}\tpsi_{k}^{(n)}\I_{k\in V^{(n)}}\right]\\
			=&\sum_{k\in V}\omega_k K_{km} \E\left[\tq_{k}^{(n)}\tpsi_{k}^{(n)}\E\left[\I_{k\in V^{(n)}}\bigg|\F^{(n-1)}\right]\right]\\
			=&\sum_{k\in V}\omega_k K_{km} \E\left[\tpsi_{k}^{(n)}\right]=\sum_{k\in V}\omega_k K_{km} \psi_k^{(n)}=\phi_m^{(n)}.
		\end{align*}
		For the density flux $\tpsi_m^{(n)}$, by the same reasoning as in \eqref{eq:unbiasfirststep}, one has	$\E\left[\tpsi_m^{(n+1)}\right]
  =\psi_m^{(n+1)}$.   Hence Theorem \ref{expectation} has been proved for all $n$.
	\end{proof}

 \subsection{Proof of Theorem \ref{variance}}
	
	\begin{proof}[Proof of Theorem \ref{variance}]
	
By Theorem \ref{expectation},  it holds that
\[
\E\left[\left(\tphi_m^{(n)}-\phi_m^{(n)}\right)^2\right]
=\E\left[\left(\tphi_m^{(n)}\right)^2\right]-\left(\phi_m^{(n)}\right)^2.
\]
We first compute the second moment:
\begin{equation}
\E\left[\left(\tphi_m^{(n)}\right)^2\right] %&=\E\left[\left(\sum_{k\in V^{(n)}}\omega_k K_{km} \tq_{k}^{(n)}\tpsi_{k}^{(n)}\right)\left(\sum_{k'\in V^{(n)}}\omega_{k'} K_{k'm} \tq_{k‘}^{(n)}\tpsi_{k'}^{(n)}\right)\right]\\
=\E\left[\sum_{k\in V}\sum_{k'\in V}\omega_k K_{km}\tq_{k}^{(n)} \tpsi_{k}^{(n)}\omega_{k'} K_{k'm}\tq_{k’}^{(n)} \tpsi_{k'}^{(n)}\E\left[\I_{k,k'\in V^{(n)}}\bigg|\F^{(n-1)}\right]\right].
\end{equation}
Note that 
\eq$\E\left[\I_{k,k'\in V^{(n)}}\bigg|\F^{(n-1)}\right]=\left\{
		\begin{aligned}
		&p_k^{(n)}, &&k'=k, \\
		&p_k^{(n)}p_{k'}^{(n)}, &&k'\notin V(k),  \\
		&0, &&k'\neq k, k'\in V(k).
		\end{aligned}
\right.$
Then, one has
\begin{multline}\label{Ephin}
\E\left[\left(\tphi_m^{(n)}\right)^2\right]
=\E\left[\sum_{k\in V}\tq_{k}^{(n)}\left(\omega_k K_{km}\tpsi_k^{(n)}\right)^2-\sum_{g=1}^G\left(\sum_{k\in V_g}\omega_k K_{km}\tpsi_k^{(n)}\right)^2\right]\\
+\E\left[\left(\sum_{g=1}^G\sum_{k\in V_g}\omega_k K_{km}\tpsi_k^{(n)}\right)^2\right]=:I_1+I_2.
\end{multline}
The $I_1$ term in \eqref{Ephin} can be estimated directly that
  $$\begin{aligned}
 I_1=&\E\left[\sum_{g=1}^G\left[\left(\sum_{k\in V_g}\tq_{k}^{(n)}\left(\omega_k K_{km}\tpsi_k^{(n)}\right)^2\right)\left(\sum_{k\in V_g}p_k^{(n)}\right)-\left(\sum_{k\in V_g}\omega_k K_{km}\tpsi_k^{(n)}\right)^2\right]\right]\\
	&\leq \E\left[\frac{1}{2}\sum_{g=1}^G\sum_{k\in V_g}\sum_{k'\in V_g}\left(\sqrt{\frac{p_{k'}^{(n)}}{p_{k}^{(n)}}}\omega_k K_{km}\tpsi_k^{(n)}-\sqrt{\frac{p_{k}^{(n)}}{p_{k'}^{(n)}}}\omega_{k'} K_{k'm}\tpsi_{k'}^{(n)}\right)^2\right]=:R.\\
	\end{aligned}$$
The second to last inequality is derived from algebraic calculations and the H\"older inequality. 
 
On the other hand, a direct estimation yields
\begin{equation*}
I_2-\left(\phi_m^{(n)}\right)^2 
		=\E\left[\left(\sum_{k\in V}\omega_k K_{km}\left(\tpsi_k^{(n)}-\psi_k^{(n)}\right)\right)^2\right] \leq\E\left[\sum_{k\in V}\omega_k K_{km}\left(\tpsi_k^{(n)}-\psi_k^{(n)}\right)^2\right],
\end{equation*}
 where the first equality utilizes $\E\left[\sum_{g=1}^G\sum_{k\in V_g}\omega_k K_{km}\tpsi_k^{(n)}\right]=\E\left[\sum_{k\in V}\omega_k K_{km}\tpsi_k^{(n)}\right]=\phi_k^{(n)}$ and the last inequality utilize $\sum_{k\in V}\omega_k K_{km}=1$ in \eqref{assK}.

Hence, we have that
\begin{gather*}
\E\left[\left(\tphi_m^{(n)}-\phi_m^{(n)}\right)^2\right]\le 
\E\left[\sum_{k\in V}\omega_k K_{km}\left(\tpsi_k^{(n)}-\psi_k^{(n)}\right)^2\right]+R.
\end{gather*}
Multiplying both sides of the inequality by $\Sigma_T$ and integrating with respect to $\br \in \D$, and using the definition of $\mathcal{M}$ below equation \eqref{assM}, we obtain
 \begin{equation}\label{eq:variencebound}
 \E \norm{\sqrt{\Sigma_T}\left(\tphi_m^{(n)}-\phi_m^{(n)}\right)}_{2}^2 \leq  \sum_{k\in V}\omega_k K_{km}\norm{\sqrt{\Sigma_T} \left(\tpsi_k^{(n)}-\psi_k^{(n)}\right)}_2^2+\mathcal{M}.
\end{equation}

Since $\tpsi_k^{(n)}-\psi_k^{(n)}$ solves system \eqref{3deq} with the source term $\tphi_k^{(n-1)}-\phi_k^{(n-1)}$, by Lemma \ref{aolemmaA1}, multiplying \eqref{eq:variencebound} by $\omega_m$ and summing over $m$ gives us  (noting $\sum_m \omega_m=1$, $\sum_{m\in V}\omega_{m}K_{km}=1$)

\begin{equation*}
\E\left[\sum_{m\in V}\omega_m \norm{\sqrt{\Sigma_T}\left(\tphi_m^{(n)}-\phi_m^{(n)}\right)}_{2}^2 \right] 
	    \leq\norm{\frac{\Sigma_S}{\Sigma_T}}_{\infty}^2\E\left[\sum_{k\in V}\omega_k \norm{\sqrt{\Sigma_T}\left(\tphi_m^{(n-1)}-\phi_m^{(n-1)}\right)}_{2}^2 \right]+\mathcal{M}.
\end{equation*}
By iterating this process, we achieve the desired variance control for the sequence ${\tphi_m^{(n)}}$.

% \begin{equation}
% \begin{split}
% 	    &\E\left[\sum_{m\in V}\omega_m \norm{\sqrt{\Sigma_T}\left(\tphi_m^{(n)}-\phi_m^{(n)}\right)}_{2}^2 \right] \\
% 	    \leq&\norm{\frac{\Sigma_S}{\Sigma_T}}_{\infty}^2\E\left[\sum_{k\in V}\omega_k \left(\sum_{m\in V}\omega_{m}K_{km}\right) \norm{\sqrt{\Sigma_T}\left(\tphi_m^{(n-1)}-\phi_m^{(n-1)}\right)}_{2}^2\right]+\left(\sum_{m\in V}\omega_m\right)\mathcal{M}\\
% 	    \leq&\norm{\frac{\Sigma_S}{\Sigma_T}}_{\infty}^2\E\left[\sum_{k\in V}\omega_k \norm{\sqrt{\Sigma_T}\left(\tphi_m^{(n-1)}-\phi_m^{(n-1)}\right)}_{2}^2 \right]+\mathcal{M}.
% \end{split}
% \end{equation}

    Then for $\tpsi_m^{(n)}-\psi_m^{(n)}$, by Lemma \ref{aolemmaA1}, one has
    $$
    \begin{aligned}
        \E\left[\sum_{m\in V}\omega_m\int_{\D}\Sigma_T\left(\tpsi_m^{(n)}-\psi_m^{(n)}\right)^2\dd \br\right]\leq & \norm{\frac{\Sigma_S}{\Sigma_T}}_{\infty}^2 \E\left[\sum_{m\in V}\omega_m\int_{\D}\Sigma_T\left(\tphi_m^{(n-1)}-\phi_m^{(n-1)}\right)^2\dd \br\right]\\
        \leq&\norm{\frac{\Sigma_S}{\Sigma_T}}_{\infty}^{2n}V_0+\frac{\norm{\frac{\Sigma_S}{\Sigma_T}}_{\infty}^2-\norm{\frac{\Sigma_S}{\Sigma_T}}_{\infty}^{2n}}{1-\norm{\frac{\Sigma_S}{\Sigma_T}}_{\infty}^2}\mathcal{M}.
    \end{aligned}$$
Using the fact that $\norm{\frac{\Sigma_S}{\Sigma_T}}_{\infty}^2\le 1$ gives the result.
	\end{proof}

% \begin{remark}
% In the proof, there is no requirement about how to divide the groups. When $G=1$, the computational cost of each sample is the minimum. On the other hand, one can group the discrete velocity by the distance between them.
% For isotropic scattering $K_{km}=1$, we consider a quadrature with uniform weight, i.e. $\omega_m=1/M$ and let $|V_1|=|V_2|=\cdots=|V_G|=\frac{M}{G}$. 
%  Then $p_k^{(n)}=G/M$ and $\mathcal{M}_{m}^{(n)}$ in \eqref{assM} becomes
% $$\begin{aligned}
%     \mathcal{M}_{m}^{(n)}=\frac{1}{2M^2}\int_{\D}\Sigma_T \sum_{g=1}^G\sum_{k\in V_g}\sum_{k'\in V_g}\left(\tpsi_k^{(n)}-\tpsi_{k'}^{(n)}\right)^2 \dd \br.
% \end{aligned}$$

% When $\tpsi_k^{(n)}$ smoothly depends on $\Omega_k$, we can assume that $\displaystyle\tpsi_k^{(n)}-\tpsi_{k'}^{(n)}\sim |\Omega_{k'}-\Omega_k|\sim \frac{1}{G^{1/d}}(d=1,2,3)$, then we have 
% $$\begin{aligned}
%     \mathcal{M}_{m}^{(n)}\sim &\frac{1}{2M^2}\int_{\D}\Sigma_T \sum_{g=1}^G\sum_{k\in V_g}\sum_{k'\in V_g}|\Omega_{k'}-\Omega_k|^2 \dd \br \\
%     \sim & \left\{\begin{aligned}
%         & \mathcal{O}(G^{-3}), && d=1, \\
%         & \mathcal{O}(G^{-2}), && d=2, \\
%         & \mathcal{O}(G^{-5/3}), && d=3, \\
%     \end{aligned}\right.
% \end{aligned}$$
% Here $d$ is the dimension of velocity space. Usually, $d=1$ for slab geometry, $d=2$ for velocity defined on a 2D disk as in \cite{} and $d=3$ for velocity defined on a unit sphere. 
% \end{remark}

\subsection{Proof of Theorem \ref{convergenceoflow}}

In Theorem \ref{convergenceoflow}, we view the iterates of RSI method as the states of a Markov chain.
Our goal is to investigate the convergence of RSI to the solution of the DOM system in \eqref{3deqphi} using the tool of Markov chains. In particular, we study the convergence rate of the laws (distributions) of the states to the invariant measure in RSI. This would give the accurate description of the  performance of the algorithm.

The following lemma gives the boundedness of the solutions of RSI.
\begin{lemma}\label{lmm:boundness}
Suppose  $\alpha\le K_{km}\le \beta, \quad \forall k, m\in V$ and
$\lambda:=\frac{\beta}{\alpha}\norm{\frac{\Sigma_S}{\Sigma_T}}_{\infty}<1$.
Then, there is a constant $C>0$ which is independent of $\{V^{(m)}\}_{m\ge 0}$ such that almost surely
%\begin{gather}
%\sup_n\sup_{m\in V^{(n+1)}} \norm{\sqrt{\Sigma_T} %\tphi_m^{(n)}}_2\le C
%\end{gather}
%Consequently, 
\begin{gather}\label{eq:psimuniformbound}
\sup_n \sup_{m\in V}  \norm{\sqrt{\Sigma_T} \tpsi_m^{(n)}}_2\le C.
\end{gather}
\end{lemma}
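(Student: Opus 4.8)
The plan is to establish the uniform bound by deriving a recursive inequality for the quantity $A^{(n)}:=\sup_{m\in V}\norm{\sqrt{\Sigma_T}\,\tpsi_m^{(n)}}_2$, exploiting Lemma \ref{aolemmaA1} together with the two-sided bound $\alpha\le K_{km}\le\beta$. First I would apply Lemma \ref{aolemmaA1} to $\tpsi_m^{(n+1)}$, which solves the transport system \eqref{3deq} with source $\tphi_m^{(n)}$ and the fixed data $Q$, $\psi_\Gamma^-$: this gives
\[
\norm{\sqrt{\Sigma_T}\,\tpsi_m^{(n+1)}}_2 \le \norm{\tfrac{\Sigma_S}{\Sigma_T}}_\infty \norm{\sqrt{\Sigma_T}\,\tphi_m^{(n)}}_2 + C_0,
\]
where $C_0:=\norm{Q/\sqrt{\Sigma_T}}_2+\tfrac1{\sqrt2}\norm{\psi_\Gamma^-}_{L^2(\pt\D_m^-)}$ is a fixed constant independent of $n$ and of the random choices. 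Then I would bound $\norm{\sqrt{\Sigma_T}\,\tphi_m^{(n)}}_2$ using the representation \eqref{1drsi}: since $\tphi_m^{(n)}=\sum_{k\in V^{(n)}}\omega_k K_{km}\,\tq_k^{(n)}\tpsi_k^{(n)}$, the triangle inequality gives $\norm{\sqrt{\Sigma_T}\,\tphi_m^{(n)}}_2\le \big(\sum_{k\in V^{(n)}}\omega_k K_{km}\,\tq_k^{(n)}\big)\,A^{(n)}$, so the whole game is to control the scalar weight $w_m^{(n)}:=\sum_{k\in V^{(n)}}\omega_k K_{km}\,\tq_k^{(n)}$.

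The key deterministic estimate is $w_m^{(n)}\le \beta/\alpha$ for every $m$ and every realization. To see this, recall $\tq_k^{(n)}=1/p_k^{(n)}$ with $p_k^{(n)}=\omega_k c_k^{(n-1)}/\sum_{m'\in V(k)}\omega_{m'}c_{m'}^{(n-1)}$ from \eqref{qkn}, so $\omega_k\,\tq_k^{(n)}=\sum_{m'\in V(k)}\omega_{m'}c_{m'}^{(n-1)}/c_k^{(n-1)}$. Using $\alpha\le K_{km}\le\beta$ one has, from \eqref{cmn}, $\alpha\le c_{m'}^{(n-1)}/c_k^{(n-1)}\le \beta/\alpha$ bounds on ratios of the $c$'s — more precisely $c_{m'}^{(n-1)}=\sum_{j\in V^{(n-1)}}\omega_j K_{jm'}$ lies in $[\alpha\,\Omega_{n-1},\beta\,\Omega_{n-1}]$ where $\Omega_{n-1}:=\sum_{j\in V^{(n-1)}}\omega_j$, hence $c_{m'}^{(n-1)}/c_k^{(n-1)}\le\beta/\alpha$. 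Therefore $\omega_k\tq_k^{(n)}\le(\beta/\alpha)\sum_{m'\in V(k)}\omega_{m'}$, and summing over the single $k$ chosen from each group $V_g$ (with $K_{km}\le$ absorbed or carried as an extra $\beta$ if needed, depending on the exact normalization used), one gets $w_m^{(n)}\le(\beta/\alpha)\sum_{m'\in V}\omega_{m'}=\beta/\alpha$ using \eqref{assK}. Combining with the displayed transport estimate yields
\[
A^{(n+1)} \le \lambda\,A^{(n)} + C_0,\qquad \lambda=\tfrac{\beta}{\alpha}\norm{\tfrac{\Sigma_S}{\Sigma_T}}_\infty<1,
\]
and iterating this linear recursion gives $A^{(n)}\le \lambda^n A^{(0)} + C_0/(1-\lambda)$ for all $n$, with $A^{(0)}=\sup_m\norm{\sqrt{\Sigma_T}\psi_m^{(0)}}_2$ finite and deterministic; setting $C:=A^{(0)}+C_0/(1-\lambda)$ proves \eqref{eq:psimuniformbound}, and the bound is manifestly independent of $\{V^{(m)}\}_{m\ge0}$ since every step above was a pointwise (almost sure) estimate.

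The main obstacle I anticipate is pinning down the exact factor in the bound $w_m^{(n)}\le\beta/\alpha$: one must be careful whether $K_{km}$ in the numerator of $\tphi_m^{(n)}$ contributes an additional $\beta$ (which would worsen the constant to $\beta^2/\alpha$ and might require $\lambda$ to be redefined accordingly) or whether the normalization \eqref{assK} together with the definition of $p_k^{(n)}$ makes the $K_{km}$ factors cancel cleanly. The cleanest route is to track $\omega_k K_{km}\tq_k^{(n)}=K_{km}\sum_{m'\in V(k)}\omega_{m'}c_{m'}^{(n-1)}/c_k^{(n-1)}$ and bound each ratio by $\beta/\alpha$ and $K_{km}\le\beta$, so that in fact $w_m^{(n)}\le\beta\cdot(\beta/\alpha)\cdot\sum_{m'}\omega_{m'}$; since the hypothesis of the lemma is stated with $\lambda=(\beta/\alpha)\norm{\Sigma_S/\Sigma_T}_\infty$, I expect the intended argument only needs $\beta/\alpha$, meaning some of these $K$-factors must telescope against the $c$'s — verifying that telescoping is the one genuinely delicate bookkeeping step, and everything else is routine.
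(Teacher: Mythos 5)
Your overall strategy coincides with the paper's: use Lemma \ref{aolemmaA1} for the one-step transport estimate, reduce everything to a deterministic bound on the random weight $w_m^{(n)}=\sum_{k\in V^{(n)}}\omega_k K_{km}\tq_k^{(n)}$, and close a geometric recursion (the paper runs the recursion on $\norm{\sqrt{\Sigma_T}\,\tphi_m^{(n)}}_2$ rather than on $\norm{\sqrt{\Sigma_T}\,\tpsi_m^{(n)}}_2$, which is immaterial). However, the one step you flag as unverified is precisely where your argument, as written, does not close. Bounding each ratio $c_{m'}^{(n-1)}/c_k^{(n-1)}$ by $\beta/\alpha$ and then using $K_{km}\le\beta$ gives only $w_m^{(n)}\le\beta^2/\alpha$, and under the stated hypothesis $\frac{\beta}{\alpha}\norm{\frac{\Sigma_S}{\Sigma_T}}_{\infty}<1$ the resulting recursion $A^{(n+1)}\le \frac{\beta^2}{\alpha}\norm{\frac{\Sigma_S}{\Sigma_T}}_{\infty}A^{(n)}+C_0$ need not be contractive. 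So the proposal, taken literally, proves the lemma only under the stronger assumption $\frac{\beta^2}{\alpha}\norm{\frac{\Sigma_S}{\Sigma_T}}_{\infty}<1$.

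The telescoping you conjecture does occur, and it is exactly the paper's computation. Do not bound $c_{m'}^{(n-1)}$ from above; use only $K_{km}\le\beta$ and $c_k^{(n-1)}=\sum_{k'\in V^{(n-1)}}\omega_{k'}K_{k'k}\ge\alpha\sum_{k'\in V^{(n-1)}}\omega_{k'}$, so that
\[
\omega_k K_{km}\tq_k^{(n)}
= K_{km}\,\frac{\sum_{m'\in V(k)}\omega_{m'}c_{m'}^{(n-1)}}{c_k^{(n-1)}}
\le \frac{\beta}{\alpha\sum_{k'\in V^{(n-1)}}\omega_{k'}}\sum_{m'\in V(k)}\omega_{m'}c_{m'}^{(n-1)}.
\]
Summing over $k\in V^{(n)}$ (one index per group) turns $\sum_{k\in V^{(n)}}\sum_{m'\in V(k)}$ into $\sum_{m'\in V}$, and the second normalization in \eqref{assK} gives
\[
\sum_{m'\in V}\omega_{m'}c_{m'}^{(n-1)}
=\sum_{k'\in V^{(n-1)}}\omega_{k'}\sum_{m'\in V}\omega_{m'}K_{k'm'}
=\sum_{k'\in V^{(n-1)}}\omega_{k'},
\]
which cancels the denominator exactly and yields $w_m^{(n)}\le\beta/\alpha$. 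The saving over your estimate is that the $K_{k'm'}$-factors hidden inside $c_{m'}^{(n-1)}$ are eliminated by the exact identity $\sum_{m'\in V}\omega_{m'}K_{k'm'}=1$ rather than by the crude bound $K_{k'm'}\le\beta$. With $w_m^{(n)}\le\beta/\alpha$ in hand, your recursion closes with $\lambda=\frac{\beta}{\alpha}\norm{\frac{\Sigma_S}{\Sigma_T}}_{\infty}<1$ and the remainder of your argument is correct.
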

\begin{proof}
By the definition in \eqref{1drsi} and Lemma \ref{aolemmaA1}, we find that
\[
\norm{\sqrt{\Sigma_T}\tilde{\phi}_m^{(n)}}_2
 \le  \sum_{k\in V^{(n)}}\omega_k K_{km}\tilde{q}_k^{(n)}
  \left(\norm{\frac{\Sigma_S}{\Sigma_T}}_{\infty} \norm{\sqrt{\Sigma_T}\tilde{\phi}_k^{(n-1)}}_2+C\right),
 \]
 where $C$ is an upper bound for the terms related to the source, boundary terms and $\Omega_m$, and thus independent of $n$ and $V^{(n)}$.

 By the definition,
\[
\begin{split}
 \sum_{k\in V^{(n)}}\omega_k K_{km}\tilde{q}_k^{(n)}
 &=\sum_{k\in V^{(n)}} K_{km}\frac{\sum_{m'\in V(k)}\omega_{m'}c_{m'}^{(n-1)}}{c_k^{(n-1)}}\\
 &=\sum_{k\in V^{(n)}}\frac{K_{km} \sum_{k'\in V^{(n-1)}}\omega_{k'} \sum_{m'\in V(k)}\omega_{m'}K_{k'm'}}{\sum_{k'\in V^{(n-1)}} \omega_{k'}K_{k' k} }.
 \end{split}
 \]
 Using the lower and upper bounds of $K_{km}$, one then has
 \begin{multline*}
\sum_{k\in V^{(n)}}\frac{K_{km} \sum_{k'\in V^{(n-1)}}\omega_{k'} \sum_{m'\in V(k)}\omega_{m'}K_{k'm'}}{\sum_{k'\in V^{(n-1)}} \omega_{k'}K_{k' k} } \\
\le \frac{\beta}{\alpha}
\sum_{k\in V^{(n)}}\frac{\sum_{k'\in V^{(n-1)}}\omega_{k'} \sum_{m'\in V(k)}\omega_{m'}K_{k'm'}}{\sum_{k'\in V^{(n-1)}} \omega_{k'} }
=\frac{\beta}{\alpha}.
 \end{multline*}
 With the assumption on $\lambda$, one then has
 \[
 \sup_{m\in V^{(n+1)}}\norm{\sqrt{\Sigma_T}\tilde{\phi}_m^{(n)}}_2\le  \lambda \sup_{k\in V^{(n)}} \norm{\sqrt{\Sigma_T}\tilde{\phi}_k^{(n-1)}}_2
 +\frac{\beta}{\alpha}C.
 \]
 By induction, it is easy to verify that $\sup_{m\in V^{(n+1)}} \norm{\sqrt{\Sigma_T}\tilde{\phi}_m^{(n)}}_2$
 is uniformly bounded with respect to $n$. Then,
 by Lemma \ref{aolemmaA1}, $\sup_{m\in V^{(n+1)}} \norm{\sqrt{\Sigma_T}\tilde{\psi}_m^{(n)}}_2$ is thus uniformly bounded with respect to $n$.
Using \eqref{1drsi}, we can define $ \tilde{\phi}_m^{(n)}$ and thus $ \tilde{\psi}_m^{(n)}$ for all $m\in V$, not just those in $V^{(n+1)}$, though only those in $V^{(n+1)}$ would be used. Since $\sum_{k\in V^{(n+1)}}\omega_k K_{km}\tilde{q}_k^{(n+1)}\le \beta/\alpha$, \eqref{eq:psimuniformbound} holds.
\end{proof}

By Lemma \ref{lmm:contract}, it is clear that if all the implementations start with the same $V^{(0)}$, then the convergence of the laws of $\bm{\tilde{\psi}}$ would directly result from the contraction property. However, the problem is that the initial $V^{(0)}$ could be different. Hence, besides the contraction property with the same $V^{(n)}$, we also need the ergodicity of the set $V^{(n)}$. 

As we have mentioned, $V^{(n)}$ is a Markov chain in $\mathcal{S}_V$. The state space $\mathcal{S}_V$ is finite, with $N = |\mathcal{S}_V| < \infty$.  To study the ergodicity of the chain, consider two initial random values $V_1^{(0)}$ and $V_2^{(0)}$ for this chain. Let the corresponding laws be $\mu_{V,1}^{n}$ and $\mu_{V,2}^n$ respectively. We consider the following classical coupling. 

Suppose that $P_{ij}=\mathbb{P}(v_j| v_i)$ is the transition probability where $v_i\in \mathcal{S}_V\subset \{0, 1\}^M$. Define $c_j:=\inf_i P_{ij}$ and consider
\[
c=\sum_{j=1}^N c_j>0.
\]
Let $V_1^{(n)}$ and $V_2^{(n)}$ be defined by the following.
\begin{itemize}
\item Construct a coupling for $V_1^{(0)}$ and $V_2^{(0)}$ (for instance, one can make them independent).
\item Generate a sequence of i.i.d. Bernoulli variables $\alpha_n$ such that $\mathbb{P}(\alpha_n = 1) = c$.
For each $n\ge 1$, we do the following:
\begin{enumerate}
\item If $V_1^{(n-1)}=V_2^{(n-1)}$, we sample $V_1^{(n)}$ and $V_2^{(n)}$ according to the transition probability $P_{ij}$.
\item If $V_1^{(n-1)}\neq V_2^{(n-1)}$, when $\alpha_n=1$, we set
\[
V_1^{(n)}=V_2^{(n)}=u_n,
\]
where $u_n$ is sampled from the measure $\nu$ defined by $\nu({j}) = c_j/c$, independent of all previous random variables. 
If $\alpha_n = 0$, then $V_1^{(n)}$ and $V_2^{(n)}$ are generated independently, with the transition probability
\[
\mathbb{P}(v_j|v_i)=\frac{P_{ij}-c\nu_j}{1-c}.
\]
\end{enumerate}
\end{itemize}

By the construction above, the chain $V_1^{(n)}$ will have law $\mu_{V,1}^n$ while $V_2^{(n)}$ will have law $\mu_{V,2}^n$. In other words, if we look at them separately, they are implementations of the Markov chain with the given initial values. However, in the coupling, as soon as $V_1^{(n)}$ and $V_2^{(n)}$
become the same at some point, they move together for later times. This then allows us to gauge the difference between the laws by estimating the probability that they are not equal. At each time step, if they are not equal at $n-1$, then there is at least a probability $c$ that they become the same at time $n$. 
Based on this, one can directly conclude the following fact.
\begin{lemma}\label{lmm:survivingevent}
Consider the event for the coupling constructed above
\begin{gather}
E_n:=\{\omega\in \bar\Omega: V_1^{(n)} \neq V_2^{(n)}\}.
\end{gather}
Then, it satisfies that
\begin{equation}
TV(\mu_{V,1}^n, \mu_{V,2}^n) \le \mathbb{P}(E_n)\le (1-c)^n,
\end{equation}
where $TV$ indicates the total variation norm.
\end{lemma}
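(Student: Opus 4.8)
The construction displayed above is the classical Doeblin (minorization) coupling, so the plan is threefold: (i) check that the coupling is well defined and that its two components, viewed separately, are the chains with the prescribed initial laws; (ii) apply the standard coupling inequality; (iii) run the elementary geometric estimate on the ``not-yet-coalesced'' event $E_n$.

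First I would confirm the coupling makes sense. Because $\alpha\le K_{km}\le\beta$ for all $k,m\in V$, every selection probability $p_m^{(n)}$ in \eqref{qkn} is strictly positive, hence each transition probability $P_{ij}$ of the finite Markov chain $V^{(n)}$ on $\mathcal{S}_V$ is strictly positive; since $\mathcal{S}_V$ is finite this forces $c_j=\inf_i P_{ij}>0$ and therefore $c=\sum_{j=1}^N c_j>0$. Also $c\le\sum_j P_{i_0 j}=1$ for any fixed $i_0$, and the degenerate case $c=1$ (where $P_{ij}$ is independent of $i$, the chain equilibrates in one step, and the bound is trivial) may be set aside, so I assume $c\in(0,1)$. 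With $\nu_j=c_j/c$ one has $c\nu_j=c_j=\inf_i P_{ij}\le P_{ij}$, so the residual kernel $\tilde P_{ij}:=(P_{ij}-c\nu_j)/(1-c)$ is nonnegative and indeed stochastic, and the identity $c\nu_j+(1-c)\tilde P_{ij}=P_{ij}$ shows the ``split'' update reconstitutes the original transition. From this I would conclude that each of $V_1^{(n)}$, $V_2^{(n)}$, viewed on its own, is a Markov chain with kernel $P$ started respectively from $V_1^{(0)}$, $V_2^{(0)}$, i.e. $V_1^{(n)}\sim\mu_{V,1}^n$ and $V_2^{(n)}\sim\mu_{V,2}^n$.

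Next I would invoke the standard coupling inequality: for any coupling of two random variables with laws $\mu$ and $\mu'$, $TV(\mu,\mu')\le\mathbb{P}(\text{the two variables differ})$; applied to $V_1^{(n)},V_2^{(n)}$ this yields $TV(\mu_{V,1}^n,\mu_{V,2}^n)\le\mathbb{P}(V_1^{(n)}\neq V_2^{(n)})=\mathbb{P}(E_n)$. To bound $\mathbb{P}(E_n)$, I would first observe that once $V_1^{(n)}=V_2^{(n)}$ the chains are updated by the same draw forever after, so $E_n\subseteq E_{n-1}$: the events $E_n$ are nested and decreasing. On $E_{n-1}$ the Bernoulli variable $\alpha_n$ is independent of all randomness up to step $n-1$, and $\alpha_n=1$ forces coalescence at step $n$; hence $\mathbb{P}(E_n\mid E_{n-1})\le\mathbb{P}(\alpha_n=0)=1-c$, so $\mathbb{P}(E_n)=\mathbb{P}(E_n\mid E_{n-1})\mathbb{P}(E_{n-1})\le(1-c)\mathbb{P}(E_{n-1})$. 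Iterating $n$ times and using $\mathbb{P}(E_0)\le1$ gives $\mathbb{P}(E_n)\le(1-c)^n$.

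There is no genuinely difficult step here; the two places that need care are the verification that $c>0$ — which is exactly where the uniform two-sided bound $\alpha\le K_{km}\le\beta$ enters, through strict positivity of the $p_m^{(n)}$ and finiteness of $\mathcal{S}_V$ — and the structural observation that $E_n$ is decreasing, which is what allows the one-step coalescence probability $c$ to be compounded geometrically rather than merely used once. A minor bookkeeping point is confirming via $c\nu_j+(1-c)\tilde P_{ij}=P_{ij}$ that the split update has the correct joint law, so that the marginal-chain claim of step (i) is legitimate.
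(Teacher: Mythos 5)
Your proof is correct and follows exactly the argument the paper intends (the paper states the lemma as a direct consequence of the Doeblin coupling construction without writing out the details): the coupling inequality gives $TV\le\mathbb{P}(E_n)$, and the nestedness $E_n\subseteq E_{n-1}$ together with $\mathbb{P}(E_n\mid E_{n-1})\le 1-c$ yields the geometric bound. Your additional verifications that the split kernel reconstitutes $P$ and that $c>0$ are sound and consistent with the paper's subsequent lemma bounding $c\ge(\alpha/\beta)^G$.
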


The following is an observation for our RSI iteration.
\begin{lemma}
In the RSI method, the constant $c$ above has a lower bound given by
\begin{equation}
c\ge \theta^G, \quad \theta=\inf_n \min_{g=1,\cdots, G} \min_{m, m'\in V_g}\frac{c_m^{(n)}}{c_{m'}^{(n)}}.
\end{equation}
In particular, if the transition kernel $K_{km}$ has bounds given by $\alpha\le K_{km}\le \beta, \quad \forall k, m\in V$, then one can take
\begin{equation}
c\ge \left(\frac{\alpha}{\beta}\right)^G.
\end{equation}
\end{lemma}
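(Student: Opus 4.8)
\emph{Proof proposal.} The plan is to write the one-step transition probability of the Markov chain $V^{(n)}$ explicitly and then bound it from below uniformly in the current state. First note that this chain is time-homogeneous: conditioned on $V^{(n-1)}=v_i\in\mathcal{S}_V$, the numbers $c_m^{(n-1)}=\sum_{k\in V^{(n-1)}}\omega_kK_{km}$ of \eqref{cmn}, and hence the selection probabilities $p_m^{(n)}$ of \eqref{qkn}, depend only on $v_i$; in particular the infimum over $n$ in the definition of $\theta$ is just an infimum over the (reachable) states of $\mathcal{S}_V$. Identifying a target state $v_j$ with the tuple $(m_1,\dots,m_G)\in V_1\times\cdots\times V_G$ of indices it selects from the $G$ groups, and using that the groups are drawn independently, the transition probability factorizes as
\[
P_{ij}=\prod_{g=1}^G\frac{\omega_{m_g}\,c_{m_g}(v_i)}{\sum_{m'\in V_g}\omega_{m'}\,c_{m'}(v_i)},
\]
where $c_m(v_i)$ denotes $c_m^{(n-1)}$ evaluated at $V^{(n-1)}=v_i$.

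Next I would bound each factor from below. By definition of $\theta$, for every state $v_i$ and all $m',m_g$ in the same group one has $c_{m'}(v_i)\le c_{m_g}(v_i)/\theta$; inserting this into the denominator gives
\[
\frac{\omega_{m_g}\,c_{m_g}(v_i)}{\sum_{m'\in V_g}\omega_{m'}\,c_{m'}(v_i)}\ \ge\ \theta\,\frac{\omega_{m_g}}{\sum_{m'\in V_g}\omega_{m'}},
\]
and the right-hand side no longer depends on $v_i$. Taking $\inf_i$ and multiplying over $g$ yields $c_j=\inf_i P_{ij}\ge\theta^G\prod_{g=1}^G\omega_{m_g}/(\sum_{m'\in V_g}\omega_{m'})$. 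Summing over all target states $v_j$, equivalently over all tuples $(m_1,\dots,m_G)$, the product over groups factorizes and each group sum equals $1$ (this is the normalization \eqref{qVg} with the weights $c$ replaced by $1$), so
\[
c=\sum_{j=1}^N c_j\ \ge\ \theta^G\prod_{g=1}^G\Bigl(\sum_{m_g\in V_g}\frac{\omega_{m_g}}{\sum_{m'\in V_g}\omega_{m'}}\Bigr)=\theta^G.
\]
(If some $c_{m'}$ vanish then $\theta=0$ and the inequality is trivial; otherwise the argument is carried out on the reachable part of $\mathcal{S}_V$, which is all that matters for the coupling.)

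For the last assertion, when $\alpha\le K_{km}\le\beta$ I would estimate, for any admissible $v$ and any $m,m'$ in one group, $c_m(v)=\sum_{k:\,v_k=1}\omega_kK_{km}\ge\alpha\sum_{k:\,v_k=1}\omega_k$ and $c_{m'}(v)\le\beta\sum_{k:\,v_k=1}\omega_k$; dividing, $c_m(v)/c_{m'}(v)\ge\alpha/\beta$, hence $\theta\ge\alpha/\beta$ and $c\ge(\alpha/\beta)^G$.

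I do not expect a genuine obstacle here. The only step that requires a moment's thought is the last manipulation in the second paragraph: after replacing each factor by the state-independent lower bound $\theta\,\omega_{m_g}/(\sum_{m'\in V_g}\omega_{m'})$, the sum $\sum_j c_j$ must be recognized as a product of $G$ sums, each of which equals $1$; this is what upgrades the crude per-state bound into the clean exponential bound $\theta^G$, and hence into $(\alpha/\beta)^G$.
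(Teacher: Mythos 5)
Your proposal is correct and follows essentially the same route as the paper: bound each per-group selection probability below by $\theta\,\omega_{m_g}/\sum_{m'\in V_g}\omega_{m'}$ using the definition of $\theta$, multiply over the $G$ independent groups, and sum over target states so that the product of normalized weight sums collapses to $1$; the bound $\theta\ge\alpha/\beta$ is obtained the same way. You simply spell out the factorization and summation steps that the paper compresses into ``a direct calculation.''
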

The proof is a direct calculation: $p_m^{(n)}\ge \theta\frac{\omega_m}{\sum_{\omega'\in V(m)}\omega_{m'}}$. Then, for each possible set $v\in \{0, 1\}^{M}$ with $\sum_{i\in V_g} v_i=1$ for each $g=1,\cdots, G$, one has
$\mathbb{P}(v | v')\ge \theta^{G}\prod_{g=1}^G (\frac{\omega_{m_g}}{\sum_{\omega'\in V_g}\omega_{m'}})$, where $m_g$ is the index in $V_g$ such that $v_{m_g}=1$. Summing over all possible $v$ gives the desired bound.

\begin{remark}
The bounds $\alpha, \beta$ of $K$ should not depend on the number of ordinates (i.e. $M$ or $G$) and thus $\theta$ is expected to be $O(1)$. For the power $G$ in $c\ge \theta^G$, this may seem restrictive if $G$ is large. Inside each $V_g$, if $K_{km}$ changes slowly, the number $\theta$ could be close to $1$ and this may improve the estimate. Another way to improve the mixing rate is to consider some careful construction, which we leave for future work. Nevertheless, we often take $G=1$, and this is not a problem.
\end{remark}

Considering the RSI algorithm, it generates a homogeneous Markov chain 
in $\mathcal{S}=\mathcal{S}_V\times
\mathcal{S}_{\psi}$. 
We now use this observation and the results above to prove Theorem \ref{convergenceoflow}. The main goal is to establish the ergodicity of the chain. Then, by the Birkhoff ergodic theorem, the iteration average converges to the mean of the stationary distribution, which is the solution of equation \eqref{3ddom}.

\begin{proof}[Proof of Theorem \ref{convergenceoflow}]

We consider two chains $(V_1^{(n)}, \bm{\tilde{\psi}}_1^{(n)})$ and $(V_2^{(n)}, \bm{\tilde{\psi}}_2^{(n)})$ with some given initial values. To study the laws of the two chains, we let $V_1^{(n)}$ and $V_2^{(n)}$ be coupled as constructed above.  Consider the events
\[
E_m:=\{\omega\in \bar\Omega: V_1^{(m)} \neq V_2^{(m)}\},\quad F_m=E_m\setminus E_{m+1}.
\]
By Lemma \ref{lmm:survivingevent}, $\mathbb{P}(E_m)\le (1-c)^n$ so that $\mathbb{P}(\cup_m F_m)=1$.

Conditioning on $F_m$, for any $n\ge m$, one has  by Lemma \ref{lmm:contract} that
\begin{equation*}
\mathbb{E}(\mathrm{dist}(\bm{\tilde{\psi}}_1^{(n)},  \bm{\tilde{\psi}}_2^{(n)}) | F_m)
\le \norm{\frac{\Sigma_S}{\Sigma_T}}_{\infty}^{n-m}  \mathbb{E}(\mathrm{dist}(\bm{\tilde{\psi}}_1^{(m)},  \bm{\tilde{\psi}}_2^{(m)})| F_m).
\end{equation*}
By Lemma \ref{lmm:boundness}, one has almost surely that
\[
\Delta^m:=\mathrm{dist}(\bm{\tilde{\psi}}_1^{(m)},  \bm{\tilde{\psi}}_2^{(m)})\le C.
\]
Since $\mathbb{P}(F_m)\le \mathbb{P}(E_m)$, this bound and the estimate above indicate that 
\begin{equation}\label{eq:convergenceofpsimarginal}
\begin{split}
\mathbb{E}(\mathrm{dist}(\bm{\tilde{\psi}}_1^{(n)},  \bm{\tilde{\psi}}_2^{(n)}))
&\le \sum_{m=0}^{n-1}\mathbb{P}(F_m)\norm{\frac{\Sigma_S}{\Sigma_T}}_{\infty}^{n-m}  \mathbb{E}(\Delta^m | F_m)
+\mathbb{P}(E_{n})\mathbb{E}(\Delta^n | E_n)\\
&\le C\left(\sum_{m=0}^{n-1} \mathbb{P}(E_m)\norm{\frac{\Sigma_S}{\Sigma_T}}_{\infty}^{n-m}
+\mathbb{P}(E_{n})\right)\le C\rho^n,
\end{split}
\end{equation}
where $\rho$ can be taken to be $\max(1-c, \norm{\frac{\Sigma_S}{\Sigma_T}}_{\infty})+\epsilon$ for any $\epsilon>0$ such that $\rho<1$.

The estimate \eqref{eq:convergenceofpsimarginal} is the key estimate. To complete the proof, we also define a distance for $v\in \mathcal{S}_V$ by 
\[
\mathrm{dist}_V(v, v')=\sum_{m\in V} \omega_m |v_m-v_m'|.
\]
Clearly, this distance has an upper bound of $1$. The distance between two states $(v, \bm{\tilde{\psi}}_1)$
and $(v', \bm{\tilde{\psi}}_2)$ in $\mathcal{S}=\mathcal{S}_V\times\mathcal{S}_{\psi}$ is given by
\[
d((v, \bm{\tilde{\psi}}_1), (v', \bm{\tilde{\psi}}_2))
:=\mathrm{dist}_V(v, v')+\mathrm{dist}(\bm{\tilde{\psi}}_1,  \bm{\tilde{\psi}}_2).
\]
With the estimate \eqref{eq:convergenceofpsimarginal} and the fact that $V_1^{(n)}=V_2^{(n)}$ outside $E_n$, we find that the Wasserstein distance between the laws of the two chains satisfies
\[
W_1(\mu_1^n, \mu_2^n)\le \E(\mathrm{dist}_V(v, v')+\mathrm{dist}(\bm{\tilde{\psi}}_1,  \bm{\tilde{\psi}}_2))\le  \mathbb{P}(E_n)\cdot 1+C\rho^n
\le C'\rho^n.
\]
With this in hand, by a standard argument (see, for example, \cite{hairer2011yet}), one can conclude that the Markov chain has a unique invariant measure $\pi$. Indeed, we can take the second chain $(V_2^{(n)}, \bm{\psi}_2)$ to be the one starting from the second step of the first chain, which then implies that $\mu_2^{(n)}=\mu_1^{(n+1)}$. Then, $\{\mu_1^{(n)}\}$ forms a Cauchy sequence and thus has a limit $\pi$ in $W_1$. It is then straightforward to verify that this is the invariant measure and it is unique. As soon as we have this invariant measure, we can take the second chain $(V_2^{(n)}, \bm{\tilde{\psi}}_2)$  to be the one starting with the invariant measure. The estimate \eqref{eq:convergenceofpsimarginal} then gives \eqref{eq:convpsimarginal}.
The last claim about the iteration average is a simple application of the Birkhoff ergodic theorem (\cite{benaim2022markov}, Chapter 4.6.1).

Next, we justify the mean and variance of the marginal distribution of the invariant measure in $\mathcal{S}_{\psi}$.
Consider the iteration of RSI and let $\mu_n$ be the law in $\mathcal{S}$. Since it converges in Wasserstein-1 distance to $\pi$, then one has
\[
\int_{\mathcal{S}} \bm{\tpsi} d\mu_n
\to \int_{\mathcal{S}} \bm{\tpsi} d\pi.
\]
Note that $\E \bm{\tpsi}^{(n)}=\bm{\psi}^{(n)}$ and $\bm{\psi}^{(n)}$ is the iterates in the source iteration by Theorem \ref{expectation}. Since it converges to the solution of DOM, we thus conclude the first equation in \eqref{eq:meanandvarofpsimarginal}.  For the variance, according to Theorem \ref{variance}, we find that 
\[
\begin{split}
\E \left(\mathrm{dist}(\bm{\tpsi}^{(n)}, \bm{\psi}^{(n)})\right)^2
&=\E \left(\sum_{m=1}^M \omega_m \|\sqrt{\Sigma_T}(\tpsi_m^{(n)}-\psi_m^{(n)})\|_2\right)^2 \\
&\le \E\sum_{m=1}^M \omega_m \|\sqrt{\Sigma_T}(\tpsi_m^{(n)}-\psi_m^{(n)})\|_2^2 \\
&\le \norm{\frac{\Sigma_S}{\Sigma_T}}^{2n}V_0+\frac{\mathcal{M}}{1-\norm{\frac{\Sigma_S}{\Sigma_T}}_{\infty}^2}.
\end{split}
\]

Let $a_n=\mathrm{dist}(\bm{\tpsi}^{(n)}, \bm{\psi}^{(n)})$, $b_n=\mathrm{dist}(\bm{\tpsi}^{(n)}, \bm{\psi})$ and $\delta_n=\mathrm{dist}( \bm{\psi}^{(n)}, \bm{\psi})$, one then has
\[
a_n^2\ge (b_n-\delta_n)^2
=b_n^2-2b_n\delta_n+\delta_n^2
\ge b_n^2-\delta_n b_n^2-\delta_n+\delta_n^2.
\]
Since $\delta_n\to 0$ by the convergence of SI, we then find that
\[
\limsup_{n\to\infty} \E (\mathrm{dist}(\bm{\tpsi}^{(n)}, \bm{\psi}))^2
\le \frac{\mathcal{M}}{1-\|\frac{\Sigma_S}{\Sigma_T}\|_{\infty}^2}.
\]
Since $\mu_n$ converges to $\pi$ in $W_1$, it then converges weakly to $\pi$ and thus one has
\[
\E_{\pi} (\mathrm{dist}(\bm{\tpsi}, \bm{\psi}))^2
\le \liminf_{n\to\infty} \E (\mathrm{dist}(\bm{\tpsi}^{(n)}, \bm{\psi}))^2.
\]
The claim about variance then follows. 

\end{proof}

\section{Numerical results}\label{sec:numer}

In this section, we present the performance of the RSI for two benchmark tests that exhibit ray effects. We demonstrate the numerical convergence rates of RSI, validate the theoretical results, and showcase its effectiveness in mitigating the ray effect.

Considering the XY geometry, we use a computational domain defined as $[x_l,x_r]\times[y_b,y_t]=[0,1]\times[0,1]$. 
  Equations \eqref{3deqpsi} to \eqref{3deqphi} become the following form:  
 \begin{subequations}\label{2deq}
       \begin{numcases}{}
		&$\left(c_m \frac{\pt}{\pt x} +s_m \frac{\pt}{\pt y} +\Sigma_T(x,y)\right)\psi_m(x,y)=\Sigma_S(x,y)\phi_m(x,y)+Q(x,y)$, \label{2deqpsi}\\
		&$\psi_m(x_l,y)=\psi_l(y,c_m,s_m),\  c_m>0,\quad \psi_m(x_r,y)=\psi_r(y,c_m,s_m),\  c_m<0.$ \label{2dbc1} \\
        &$\psi_m(x,y_b)=\psi_b(x,c_m,s_m),\  s_m>0,\quad \psi_m(x,y_t)=\psi_t(x,c_m,s_m),\  s_m<0,$\label{2dbc2}  \\
        &$\displaystyle\phi_m(\br)=\sum_{k\in V} \omega_k K_{km}\psi_k(\br),  $
	\end{numcases} 
    \end{subequations}
where for $m\in V$,
\eq$c_m=\sqrt{1-\xi_m^2}\cos\theta_m,\quad s_m=\sqrt{1-\xi_m^2}\sin\theta_m,\quad \xi_m\in (0,1),\ \theta_m\in(0,2\pi).$

 For angular discretization, we employ uniform angular grids within the $(\xi,\theta)$ plane. Assuming the number of ordinates to be $M=N^2$,  the uniform grids in the rectangular domain $(\xi,\theta)\in(0,1)\times (0,2\pi)$ are given by:
\eq$
     \xi_l=\frac{2l-1}{2N}, \qquad\theta_k=\frac{2\pi(2k-1)}{2N},\quad l,k=1,2,\cdots,N.
$

Spatial discretization is performed using the classical Diamond Difference (DD) method. We use a uniform spatial mesh of size $I\times J$. For $i=0,\cdots, I$ and $j=0,\cdots, J$, the spatial grid points are defined as $x_i=\frac{i}{I}$, $y_j=\frac{j}{J}$. The midpoints of these grids, referred to as half-grids, are given by $x_{i+1/2}=(x_{i}+x_{i+1})/2$ and $y_{j+1/2}=(y_{j}+y_{j+1})/2$. Let $\psi_{m,i+1/2,j}\approx \psi_m(x_{i+1/2},y_j)$, $\psi_{m,i,j+1/2}\approx \psi_m(x_{i},y_{j+1/2})$, denote the approximate values of $\psi_m$ at the half-grid points. The DD spatial discretization of equation \eqref{2deq} is as follows \cite{LewisMiller}:
\begin{subequations}\label{2deqdc}
       \begin{numcases}{}
		&$c_m \frac{\psi_{m,i+1/2,j}-\psi_{m,i-1/2,j}}{x_{i+1/2}-x_{i-1/2}} +s_m \frac{\psi_{m,i,j+1/2}-\psi_{m,i,j-1/2}}{y_{j+1/2}-y_{j-1/2}} +\Sigma_{T,i,j}\psi_{m,i,j}=\Sigma_{S,i,j}\phi_{m,i,j}+Q_{i,j},$ \label{2deqpsidc}\\
		&$\psi_{m,i,j}=\frac{1
        }{2}\left(\psi_{m,i+1/2,j}+\psi_{m,i-1/2,j}\right)=\frac{1
        }{2}\left(\psi_{m,i,j+1/2}+\psi_{m,i,j-1/2}\right), $\\
        &$\psi_{m,1/2,j}=\psi_l(y_j,c_m,s_m),\  c_m>0,\quad \psi_{m,I+1/2,j}=\psi_r(y_j,c_m,s_m),\  c_m<0.$ \\
        &$\psi_{m,i,1/2}=\psi_b(x_i,c_m,s_m),\  s_m>0,\quad \psi_{m,i,J+1/2}=\psi_t(x_i,c_m,s_m),\  s_m<0,$ \\
        &$\displaystyle\phi_{m,i,j}=\sum_{k\in V} \omega_k K_{km}\psi_{k,i,j},  $
	\end{numcases} 
    \end{subequations}

%  Notice that $\bphi_0^{(n)}$ is a random variable with non-negligible variance, hence when we check the order of convergence, we estimate error by 
% \eq$\bar{e}=\frac{1}{N_*}\sum_{n=N+1}^{N+N_*}\te^{(n)}$, where $N$ is sufficiently large that the error due to source iteration is far smaller than the error due to randomness, and $N_s$ is the number of steps sampled. In practice, we take $N=\inf\{n|e^{(n)}<10^{-10}\}$, and $N_*=1000$.

Both isotropic and anisotropic scattering kernels are considered. For anisotropic scattering, we consider the following kernel:
\eq$\K(\Omega,\Omega')=\frac{1+c\cdot c'+s\cdot s'}{2\pi}.$ 
Unless otherwise specified, we use $G=1$, meaning that only one ordinate is selected in each iteration step. This selection minimizes the computational cost per sample and is optimal for parallel computation. The primary objective of this paper is to propose and establish the properties of RSI.   Discussions about parallel efficiency will be addressed in our future work, as spatial parallelization is also important.

Let $u_{i,j}\approx u(x_i,y_j)$. One can define the discrete $\ell^2$ norm of a function $u(x,y)$ as follows:
$$
\|u\|_{2}=\sqrt{\frac{1}{IJ}\sum_{i=1}^{I}\sum_{j=1}^{J}|u_{i,j}|^2}.
$$

\begin{figure}[!htbp]
	\centering
	\subfloat[]{\includegraphics[width=0.45\linewidth]{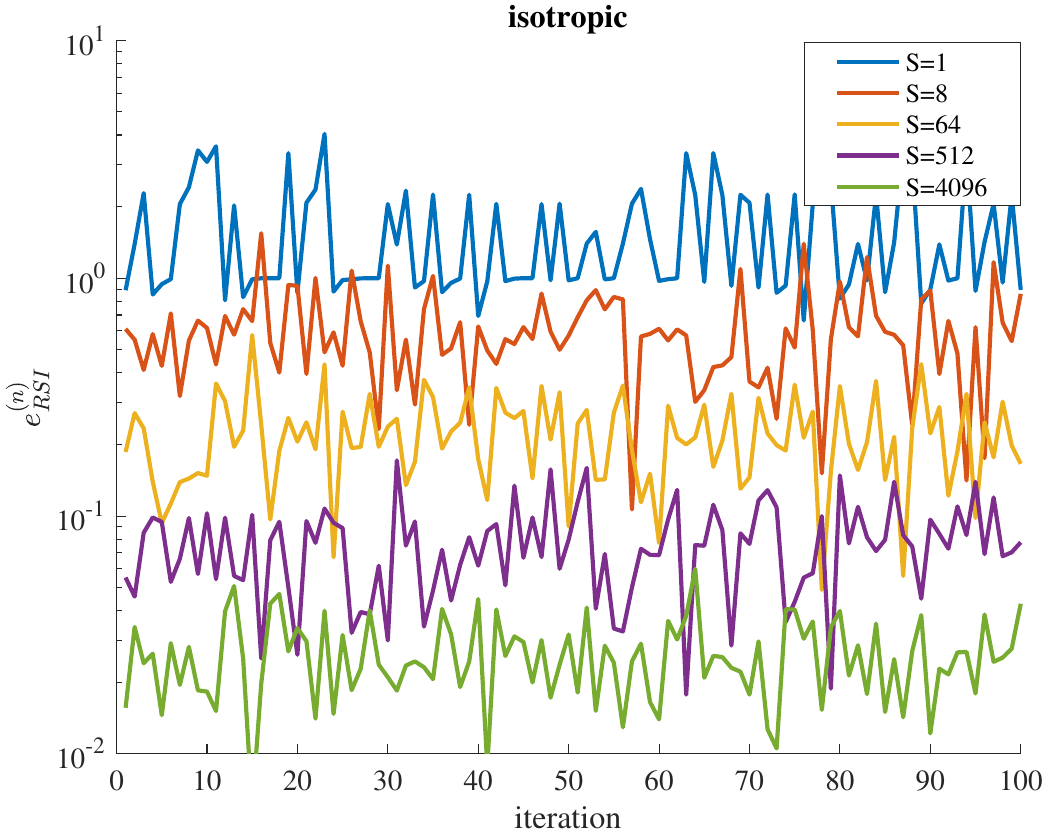}\label{errnisoex1}}
	\subfloat[]{\includegraphics[width=0.45\linewidth]{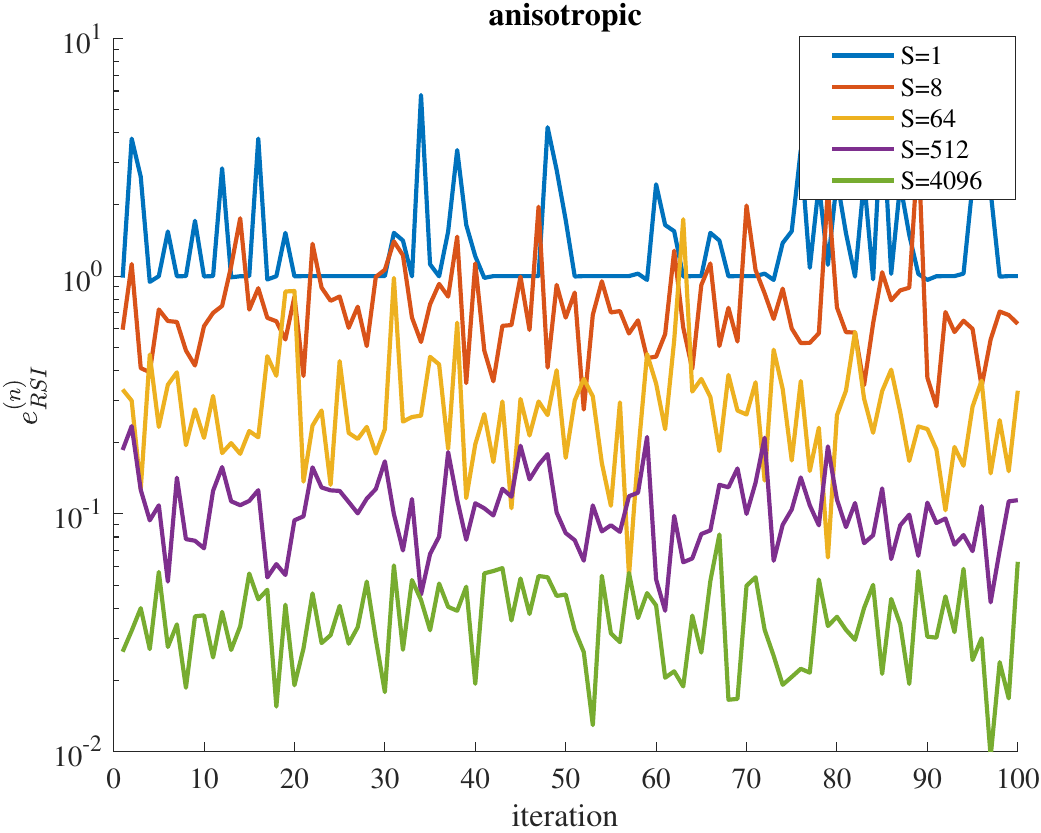}\label{errnanisoex1}}
	\caption{Example 1. The relative $\ell^2$ errors, $e_{RSI}^{(n)}$  defined as in \eqref{ersi} for different iteration steps $n$ and various numbers of samples $S$. (a) isotropic scattering; (b) anisotropic scattering. In this example $M=2^4$.}
	\label{2Derrnex1}
\end{figure}

\paragraph{Example 1.}The cross sections are constants, defined such that the total cross section $\Sigma_T=1$, and the absorption cross section $\Sigma_a=0.5$ with a zero source term.
 The inflow condition at the left boundary, for $s>0$, is given by:
$$\psi(x,0,c,s)=0, \mbox{ for }x\in [0,0.4]\cup[0.6,1], \quad  \psi(x,0,c,s)=10e^{-c^2-s^2}, \mbox{ for }x\in [0.4,0.6]. 
$$The inflow condition is a vacuum at all other boundaries. For spatial discretization in this example, we set $I=J=100$.

\paragraph{Convergence of RSI.} The particle density $\phi_{0,i,j}=\int_{S^2}\psi^{(n)}(x_i,y_j,\Omega)\dd \Omega$ in each iteration step is approximated by
\eq$\phi_{0,i,j}^{(n)}=\sum_{k\in V}\omega_k \psi^{(n)}_{k,i,j},\quad \tphi_{0,i,j}^{(n)}=\sum_{k\in V^{(n)}}\omega_k \tilde{q}_{k}^{(n)}\tpsi_{k,i,j}^{(n)}.\label{densityphi}$
in SI and RSI respectively. We assume that the SI method stops at the $N$th iteration step, such that \eq$\|\phi_{0}^{(N+1)}-\phi_{0}^{(N)}\|_2\Big/\|\phi_{0}^{(N))}\|_2<tol=10^{-10}.
\label{defN}$
 The relative $\ell^2$ error of the SI method for the $n$th iteration step is defined by
$$e_{SI}^{(n)}=\|\phi_{0}^{(n)}-\phi_{0}^{(N)}\|_2\Big/\|\phi_{0}^{(N))}\|_2.$$
 When $M=2^4$, the SI method stops at the $16$th and $7$th iteration steps, respectively, for isotropic and anisotropic scattering.
 %is shown in Figure \ref{convergenceSI}.

%\begin{figure}[!htbp]
%	\centering
%	\subfloat[]{\includegraphics[width=0.5\linewidth]{figures/convergenceSIiso.eps}\label{convergenceSIiso}}
%	\subfloat[]{\includegraphics[width=0.5\linewidth]{figures/convergenceSIaniso.eps}\label{convergenceSIaniso.eps}}
%	\caption{Example 1: relative iteration $\ell^2$ error of SI $e_{SI}^{(n)}$ with $M=2^4$ and (a) isotropic scattering; (b) anisotropic scattering.}
%	\label{convergenceSI}
%\end{figure}
If we execute $S$ RSI samples, $\tphi^{(n)}_{0,i,j,s}$ represents the value of $\tphi^{(n)}_{0,i,j}$as given in equation \eqref{densityphi} for the $s$-th sample. The average of all $S$ samples is defined as
\eq$\bphi_{0,i,j}^{(n)}=\frac{1}{S}\sum_{s=1}^S\tphi_{0,i,j,s}^{(n)}.$
For each iteration step, we define the relative $\ell^2$ error of RSI by:
\eq$e_{RSI}^{(n)}=\|\bar\phi_{0}^{(n)}-\phi_{0}^{(n)}\|_2\Big/\|\phi_{0}^{(n))}\|_2,\label{ersi}$
where $\phi_{0}^{(n)}$ is the particle density obtained by SI in the $n-$th iteration step, as defined in equation \eqref{densityphi}.

In Figure \ref{2Derrnex1}, the values of $e_{RSI}^{(N)}$ are displayed for a given $M=2^4$ and various numbers of samples $S=1,8,64,512,4096$. Tests are conducted for both isotropic and anisotropic scattering cases. It can be observed that the expectation and variance of $e_{RSI}^{(N)}$ are independent of 
 $n$ and $e_{RSI}^{(N)}$ decreases as $S$ increases. 

 \begin{figure}[!htbp]
	\centering
	\subfloat[]{\includegraphics[width=0.45\linewidth]{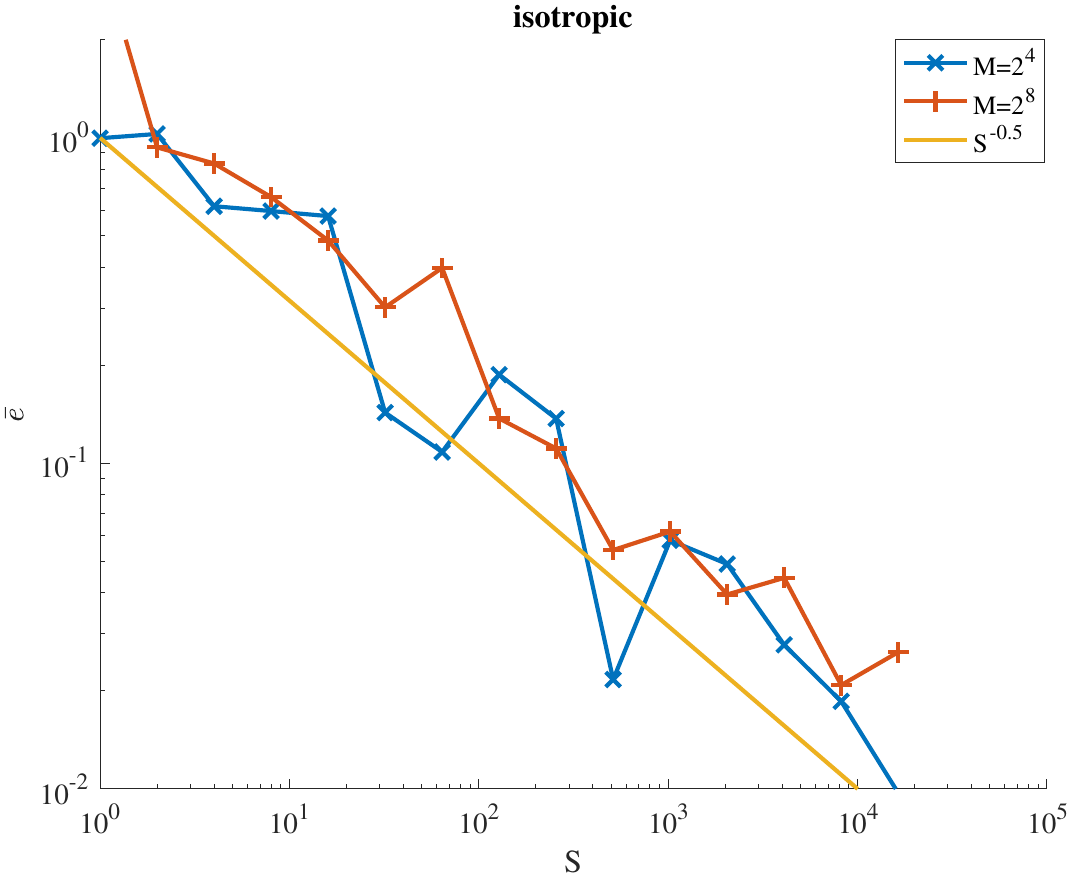}\label{errmSisoex1}}
	\subfloat[]{\includegraphics[width=0.45\linewidth]{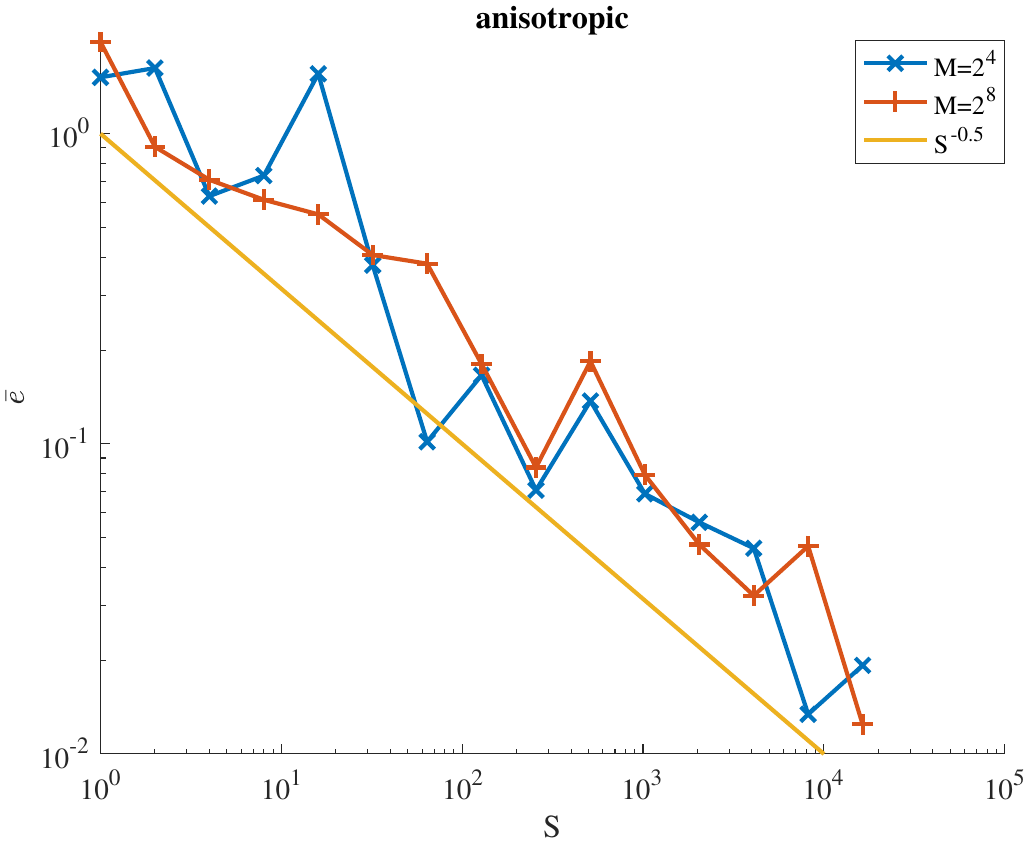}\label{errmSanisoex1}}
	\caption{Example 1. The convergence order of $e_{RSI}^{(N)}$ w.r.t the number of samples $S$, for $M=2^4$ and $2^8$. (a) isotropic scattering; (b) anisotropic scattering.}
	\label{2DisowrtS}
\end{figure}

To examine the convergence order of RSI with respect to the number of samples $S$, Figure \ref{2DisowrtS} presents the values of $e_{RSI}^{(N)}$ obtained for different $S$. Both isotropic and anisotropic scattering cases are considered, and results for different numbers of ordinates $M=2^4, 2^8$ are displayed. As observed from Figure \ref{2DisowrtS}, the convergence order of $e_{RSI}^{(N)}$ with respect to $S$ is $0.5$, which is independent of the number of ordinates $M$. This convergence order is identical to that of the Monte Carlo method.

\begin{figure}[!htbp]
	\centering
	\includegraphics[width=0.9\linewidth]{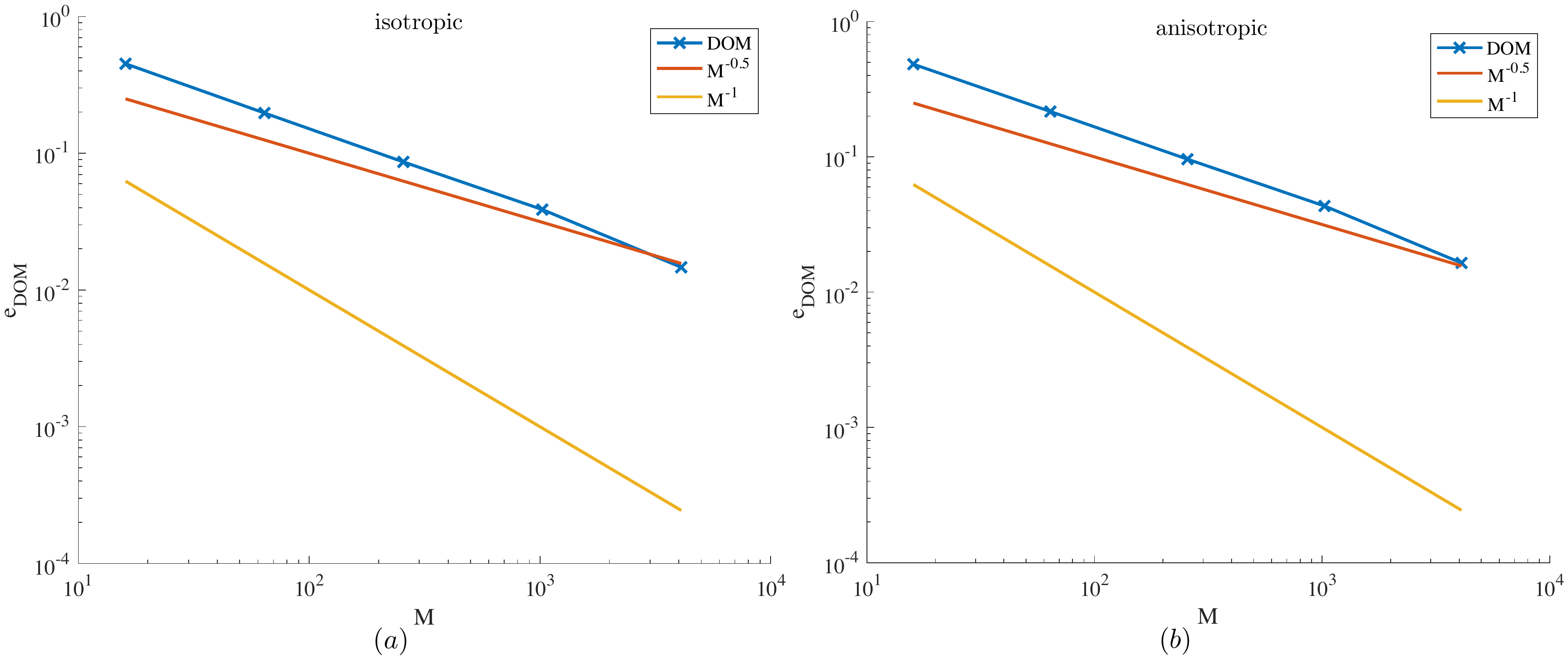}
	\caption{Example 1. The convergence order of DOM w.r.t. $M$.(a) isotropic scattering; (b) anisotropic scattering.}
	\label{2Dc}
\end{figure}
% \begin{figure}[!htbp]
% 	\centering
% 	\subfloat[]{\includegraphics[width=0.45\linewidth]{figures/edomisoex1.eps}}\label{edomisoex1}
% 	\subfloat[]{\includegraphics[width=0.45\linewidth]{figures/edomanisoex1.eps}\label{edomanisoex1}}
% 	\caption{Example 1. The convergence order of DOM w.r.t. $M$.(a) isotropic scattering; (b) anisotropic scattering. \red{change the x-axis to $M$}}
% 	\label{2Dc}
% \end{figure}

\paragraph{Convergence order of DOM and comparison of complexity. }
Since there is discontinuities in the inflow boundary conditions, the convergence order of DOM is low. The relative discrete $\ell^2$ error for DOM is defined by
\blue{\begin{equation} e^M_{DOM}=\|\phi_{0,M}^{(N)}-\phi_{0}^{ref}\|_2\Big/\|\phi_{0}^{ref}\|_2,\label{eDOM}\end{equation}
where $N$ is defined in \eqref{defN}; $\phi_{0,M}^{(N)}$ is the solution of DOM with $M$ ordinates and has converged as in \eqref{defN};} $\phi_0^{ref}$ is the reference solution to equation \eqref{2deqdc} obtained with a fine quadrature $M_{ref}=2^{14}$. Figure \ref{2Dc} shows the convergence orders of DOM with respect to $M$ for both isotropic and anisotropic scattering cases. They range between 0.5 and 0.6, which are quite low.

Note that the computational cost of DOM w.r.t.$M$ is $\mathcal{O}(M^2)$, and the cost of RSI w.r.t. the number of samples $S$ is $\mathcal{O}(S)$. Since the convergence order of RSI w.r.t. $S$ is 0.5, when the convergence order of DOM w.r.t. $M$ is lower than 1, to achieve the same level of accuracy, the overall complexity of RSI is lower than that of DOM.

\begin{figure}[!htbp]
	\centering
	\includegraphics[width=0.9\linewidth]{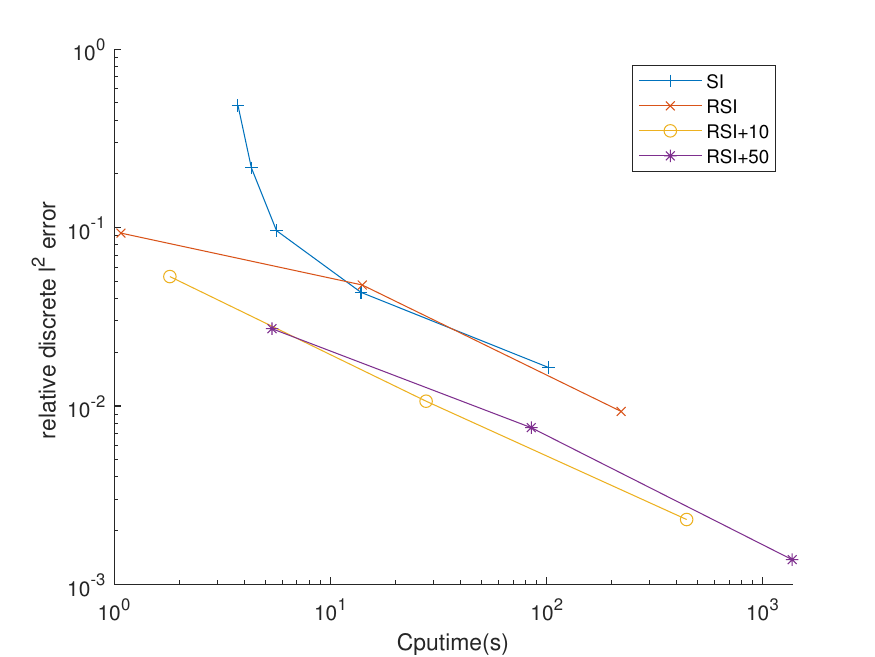}
	\caption{Example 1. The relative discrete $\ell^2$ error versus CPU time cost for SI, RSI, RSI averaged from from $N_{itr}$ to $N_{itr}+10$ and RSI averaged from $N_{itr}$ to $N_{itr}+50$ in the anisotropic scattering case.}
	\label{fig:errorvstime}
\end{figure}

\blue{For a more quantitative comparison, we compare the relative discrete $\ell^2$ error versus the CPU time spent by SI and RSI for the anisotropic scattering. Because RSI uses a finer quadrature than SI, we compare the obtained numerical results to the reference solution computed using DOM with a very fine mesh $M = 2^{14}$.}

\blue{
Define
\begin{equation}
e_{\text{RSI}} = \frac{\|\bar{\phi}_{0}^{(N)} - \phi_{0}^{\text{ref}}\|_2}{\|\phi_{0}^{\text{ref}}\|_2}, \label{eRSI}
\end{equation}
where $\bar{\phi}_{0}^{(N)}$ is the average of $S$ samples.
We compare the performance of SI and RSI on a Lenovo Legion R9000P-2021H laptop using Matlab coding. The CPU times for only the iteration part are measured for both SI and RSI. For SI with different $M$, the iterations stop when the convergence condition \eqref{defN} is satisfied. For RSI, we use $M = 2^{14}$ ordinates and $N_{\text{itr}} = 8$ iterations, with which the convergence condition \eqref{defN} is satisfied when the DOM system with $2^{14}$ ordinates is solved with SI. Although RSI is easy to parallelize, we choose serial processing for all $S$ samples here to make the comparison of complexity more fair.}

\blue{
In Figure \ref{fig:errorvstime}, we show the CPU time for different DOM system sizes $M = 2^{4}, 2^6, 2^8, 2^{10}, 2^{12}$ when performing SI, and their corresponding errors $e_{\text{DOM}}$. As a comparison, we also run RSI with $S = 2^8, 2^{12}, 2^{16}$ experiments and show the CPU time and their corresponding errors $e_{\text{RSI}}$. Moreover, due to the ergodicity, we also test RSI by averaging samples from iteration $N_{\text{itr}}$ to $N_{\text{itr}} + 10$ and from $N_{\text{itr}}$ to $N_{\text{itr}} + 50$ for all $S$ experiments. The results are plotted in Figure \ref{fig:errorvstime}. Clearly, to achieve the same accuracy, the overall CPU times of RSI are comparable to those of SI, while RSI is very easy to parallelize. Furthermore, performance can be improved by taking additional samples after iteration $N_{\text{itr}}$, demonstrating the benefits of ergodicity.}

\paragraph{Ray effect mitigation.} In the case of isotropic scattering, the particle density obtained by DOM with a fine quadrature $M=2^{14}$ is depicted in Figure \ref{RE_DD_fine}. When $M$ is not sufficiently large, the particle density exhibits nonphysical oscillations, as shown in Figure \ref{RE_DD_coarse}. These oscillations, which cannot be diminished by refining the spatial discretization, are referred to as ray effects. To quantify the ray effect, we define the following metric
\eq$e^{\infty}=\max_{i,j}\left\{\left|\phi_{0,i,j}-\phi_{0,i,j}^{ref}\right|\right\}\Big/\max_{i,j}\left\{\left|\phi_{0,i,j}^{ref}\right|\right\},\label{einfty}$
where $\phi_{0,i,j}^{ref}$ corresponds to the reference solution with a fine quadrature $M^{ref}=2^{14}$. The source iteration converges in \blue{$N_{itr}=16$} steps with tolerance $10^{-10}$ and $e^{\infty}=0.227$ for $M=2^4$ as in Figure \ref{RE_DD_coarse}, which is considerably large. Upon applying the RSI with $M=2^{14}$, as illustrated in Figure \ref{RE_DDR_coarse}, even with only $S=2^8$ samples, the ray effect has been significantly mitigated. By substituting $\phi_{0,i,j}$ with $\bphi_{0,i,j}$ in equation \eqref{einfty}, the ray effect of RSI can be quantified, yielding 
$e^{\infty}=0.081$. The computational complexity of RSI with $S=2^8$ samples is comparable to that of the SI method using $M=2^4$ ordinates, which is very low. Thanks to the ergodicity, one can take the average from \blue{$N_{itr}=16$ to $N_{itr}+10=26$} to generate an even larger number of samples $S=2^8\times 10$ (not totally independent due to the evolution of Markov chain), $e^{\infty}$ can be further reduced to $0.022$, as demonstrated in Figure \ref{RE_DDR_fine}.

\begin{figure}[!htbp]
	\centering
	\subfloat[]{\includegraphics[width=0.4\linewidth]{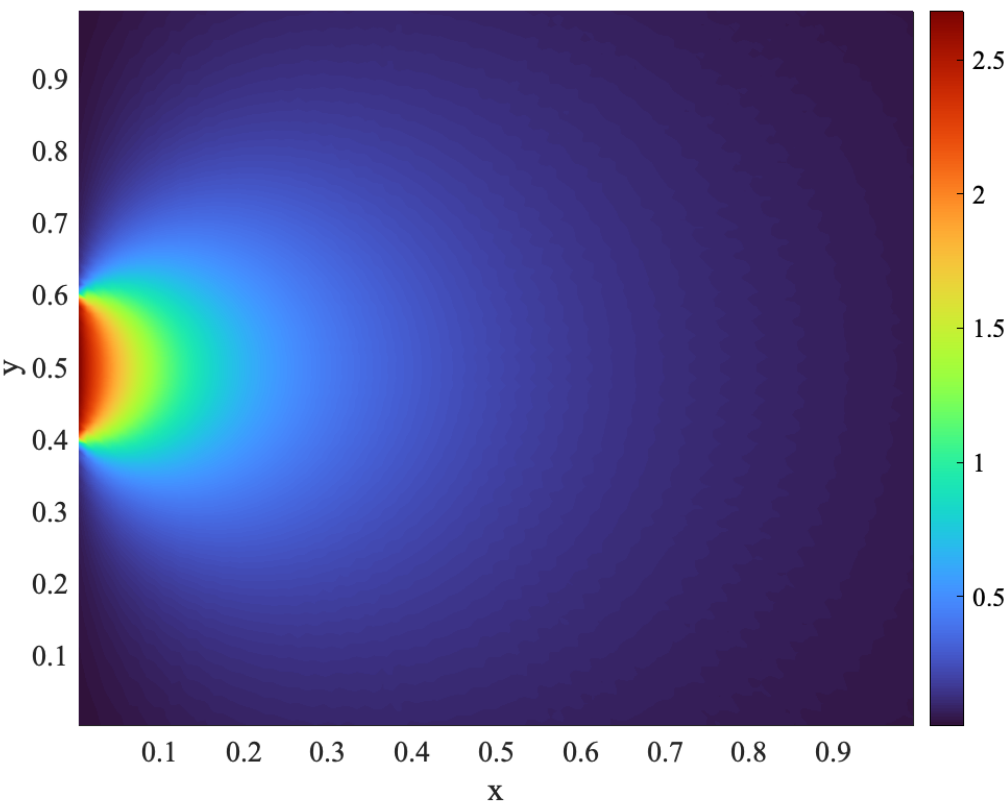}\label{RE_DD_fine}}
	\subfloat[]{\includegraphics[width=0.4\linewidth]{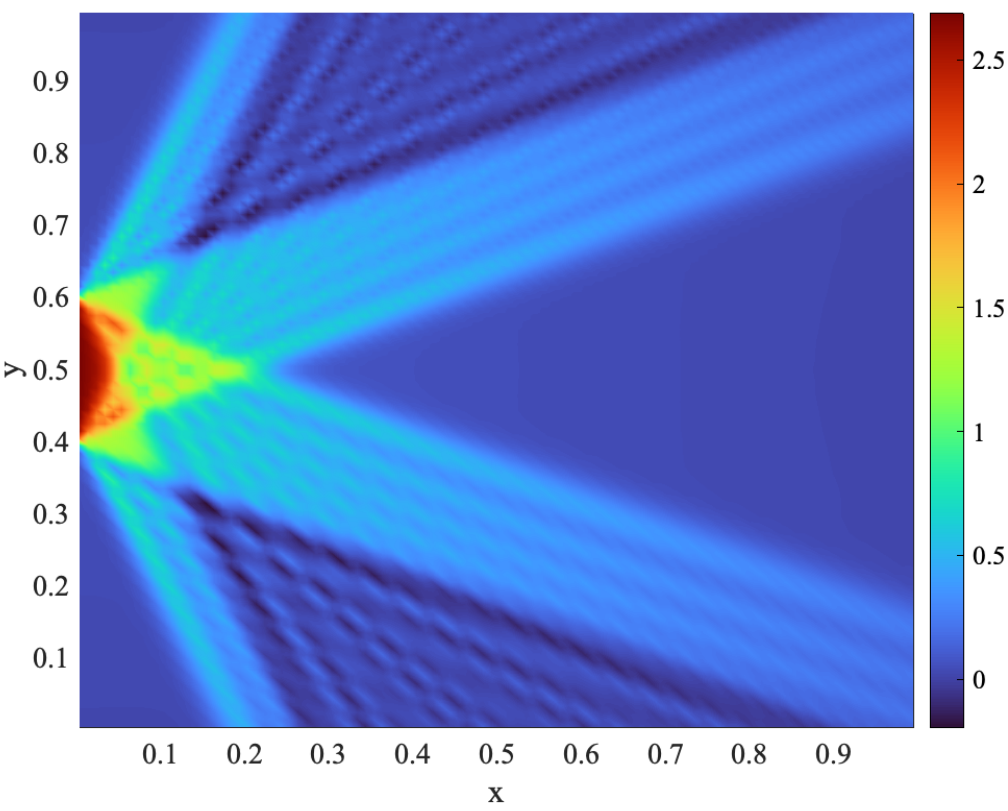}\label{RE_DD_coarse}}\\
	\subfloat[]{\includegraphics[width=0.4\linewidth]{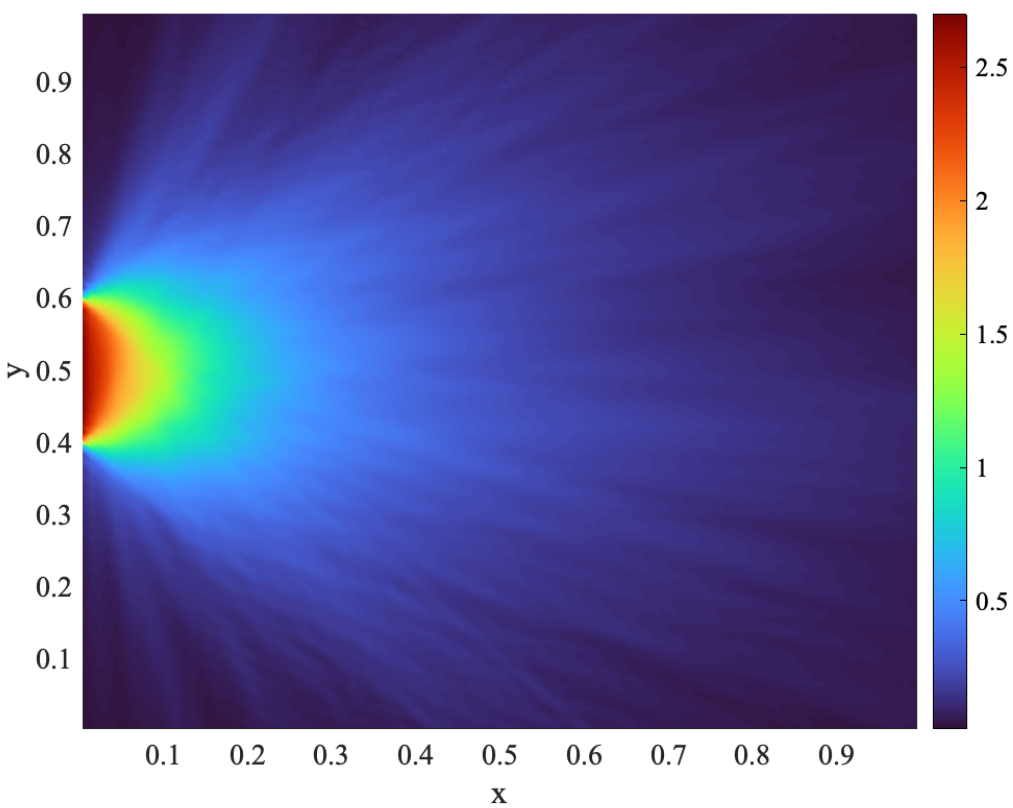}\label{RE_DDR_coarse}}
    \subfloat[]{\includegraphics[width=0.4\linewidth]{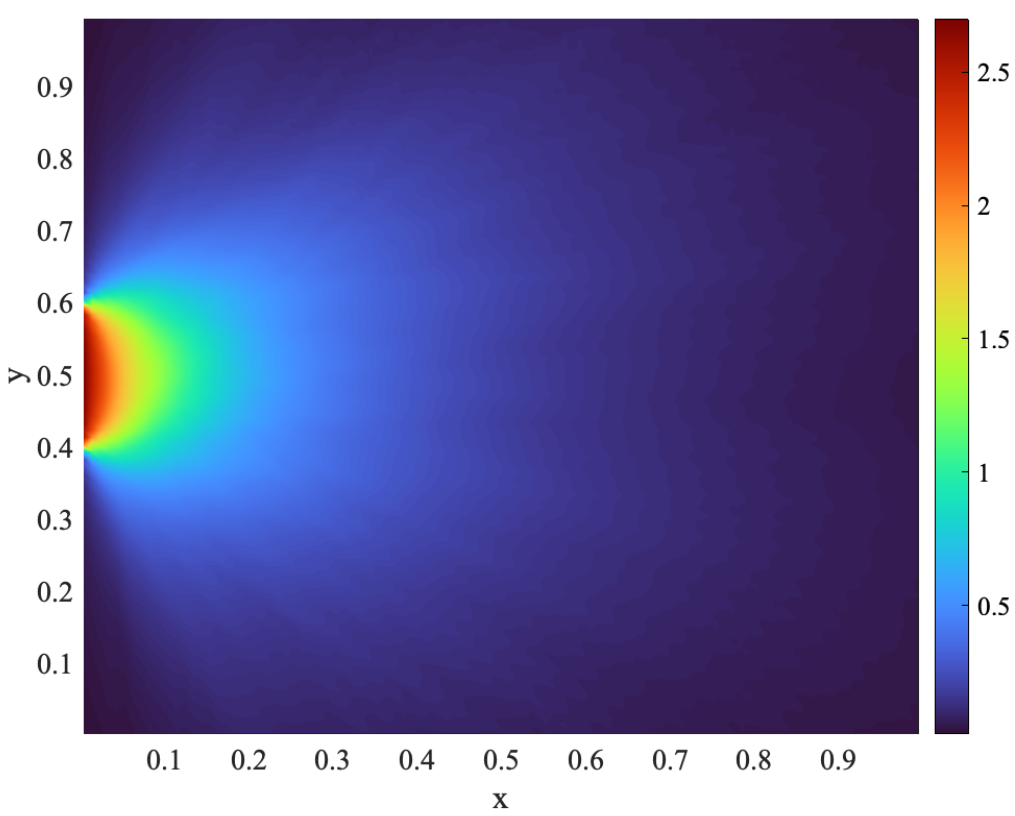}\label{RE_DDR_fine}}
	\caption{Example 1. The particle density $\phi_{0,i,j}$ obtained numerically with (a) SI with fine quadrature ($M=2^{14}$); (b) SI with coarse quadrature ($M=2^4$); (c) RSI with $M=2^{14}$ and $S=2^8$ and the same number of iterations (d) RSI with $M=2^{14}$ and $S=2^{8}$ but the average is taken from $N_{itr}$ to $N_{itr}+10$.}
	\label{2DRE}
\end{figure}

\paragraph{Convergence order of RSI w.r.t. $G$.} When more than one ordinate is chosen in each iteration step, the computational cost of running one sample increases, but the variance among different samples can be reduced. One can examine the convergence order to see how the variance decreases with respect to the number of ordinates $G$ used in each iteration step. As shown in Figure \ref{2DisowrtG}, the values of $e_{RSI}^{(N)}$, as defined in \eqref{ersi}, are displayed. These are obtained using different values of $G$ while keeping the same $M=2^{14}$, $S=8$. The convergence order with respect to $G$ is $1$, which is consistent with the results discussed in Section 3.1 for the isotropic case. Similar outcomes can be observed for the anisotropic case.

\begin{figure}[!htbp]
	\centering
	\includegraphics[width=0.9\linewidth]{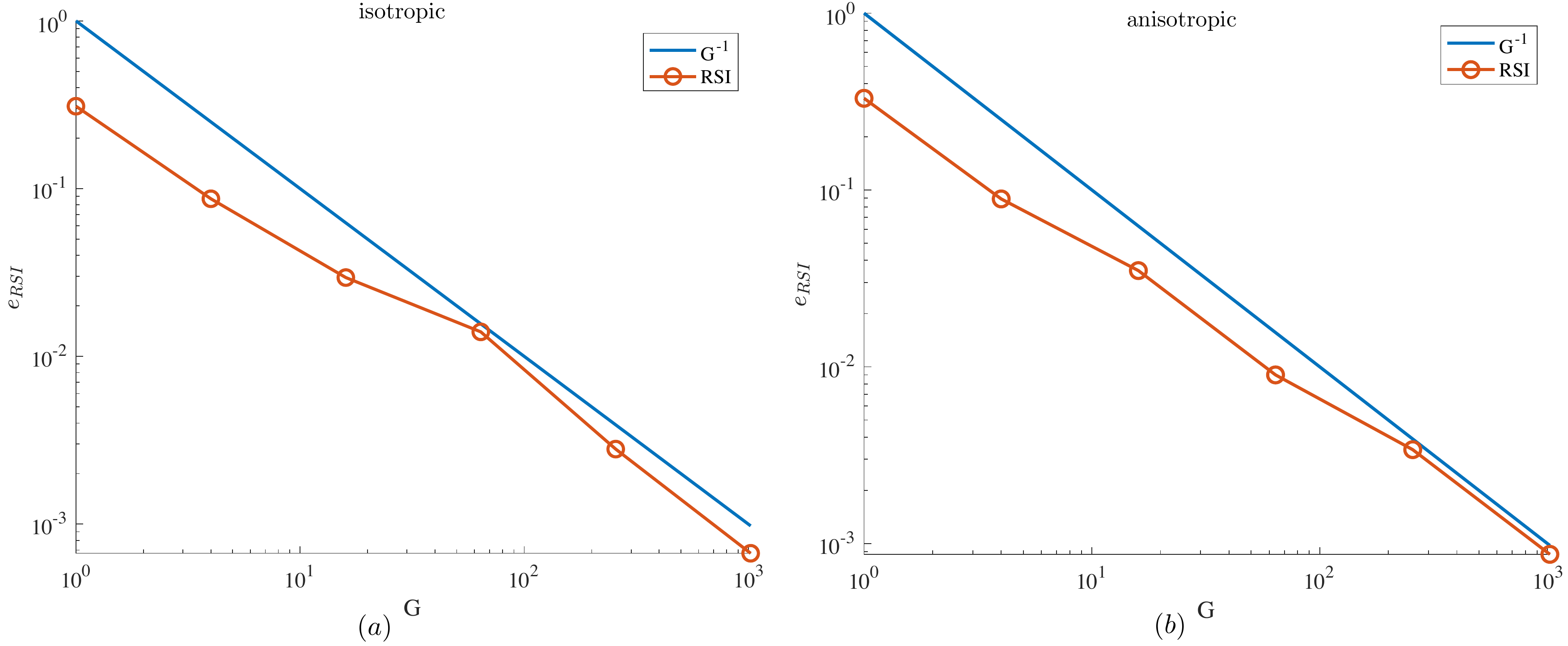}
	\caption{Example 1. The convergence order of $e_{RSI}^{(N)}$ w.r.t the number of ordinates used in each iteration step $G$, with fixed $M=2^{14}$, $S=8$.  (a) isotropic scattering; (b) anisotropic scattering.}
	\label{2DisowrtG}
\end{figure}

% \begin{figure}[!htbp]
% 	\centering
% 	\subfloat[]{\includegraphics[width=0.4\linewidth]{5a.pdf}\label{errmwrtG2Diso}}
% 	\subfloat[]{\includegraphics[width=0.4\linewidth]{figures/errmwrtG2Daniso.eps}\label{errmwrtG2Daniso}}
% 	\caption{Example 1. The convergence order of $e_{RSI}^{(N)}$ w.r.t the number of ordinates used in each iteration step $G$, with fixed $M=2^{14}$, $S=8$.  (a) isotropic scattering; (b) anisotropic scattering.}
% 	\label{2DisowrtG}
% \end{figure}

\paragraph{Example 2.} To demonstrate that the RSI method yields satisfactory results independent of the problem setup, we test a more complex lattice problem whose parameters are identical to Example 6.2 in \cite{camminady2019ray}. The problem features no inflow but includes a source within a heterogeneous material.

\begin{figure}
    \centering
    \begin{tikzpicture}
        %\draw (0,0)--(0,7)--(7,7)--(7,0)--(0,0);
        % \filldraw[draw=red,fill=red] (1,1) rectangle (2,2);
        % \filldraw[draw=red,fill=red] (1,3) rectangle (2,4);
        % \filldraw[draw=red,fill=red] (1,5) rectangle (2,6);
        % \filldraw[draw=red,fill=red] (3,1) rectangle (4,2);
        % \filldraw (3,3) rectangle (4,4);
        % \filldraw[draw=red,fill=red] (5,1) rectangle (6,2);
        % \filldraw[draw=red,fill=red] (5,3) rectangle (6,4);
        % \filldraw[draw=red,fill=red] (2,2) rectangle (3,3);
        % \filldraw[draw=red,fill=red] (4,2) rectangle (5,3);
        % \filldraw[draw=red,fill=red] (2,4) rectangle (3,5);
        % \filldraw[draw=red,fill=red] (4,4) rectangle (5,5);
        % \filldraw[draw=red,fill=red] (5,5) rectangle (6,6);
        \draw (0,0)--(0,5.6)--(5.6,5.6)--(5.6,0)--(0,0);
        \filldraw[draw=red,fill=red] (0.8,0.8) rectangle (1.6,1.6);
        \filldraw[draw=red,fill=red] (0.8,2.4) rectangle (1.6,3.2);
        \filldraw[draw=red,fill=red] (0.8,4) rectangle (1.6,4.8);
        \filldraw[draw=red,fill=red] (2.4,0.8) rectangle (3.2,1.6);
        \filldraw (2.4,2.4) rectangle (3.2,3.2);
        \filldraw[draw=red,fill=red] (4,0.8) rectangle (4.8,1.6);
        \filldraw[draw=red,fill=red] (4,2.4) rectangle (4.8,3.2);
        \filldraw[draw=red,fill=red] (1.6,1.6) rectangle (2.4,2.4);
        \filldraw[draw=red,fill=red] (3.2,1.6) rectangle (4,2.4);
        \filldraw[draw=red,fill=red] (1.6,3.2) rectangle (2.4,4);
        \filldraw[draw=red,fill=red] (3.2,3.2) rectangle (4,4);
        \filldraw[draw=red,fill=red] (4,4) rectangle (4.8,4.8);
    \end{tikzpicture}
    \caption{Example 2. Distribution of cross sections and sources in lattice problem}
    \label{lattice}
\end{figure}
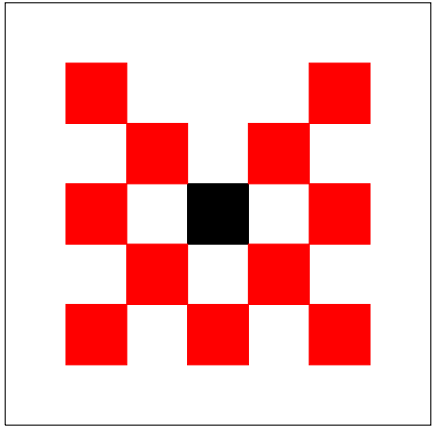

As depicted in Figure \ref{lattice}, the spatial domain is defined as $[0,7]\times[0,7]$ and is uniformly divided into $7\time 7$ squares. The red or black area is strongly absorbing medium in which $\Sigma_T=10, \Sigma_S=0$ (i.e., $\Sigma_a=10$),
and the white area is strongly scattering background in which 
$\Sigma_T=\Sigma_S=1$ (i.e., $\Sigma_a=0$). Within the black square, there is an isotropic source with $Q=1$, while $Q=0$ elsewhere.  In this example, both isotropic and anisotropic scatterings are tested.

The method of measuring the convergence of RSI is analogous to that used in Example 1. Figure \ref{2DisowrtS2} illustrates the convergence order with respect to the number of samples $S$ for both isotropic and anisotropic scattering, where we use $I=J=70$ for spacial discretization. In this more complex lattice problem, the convergence order is $0.5$ for all cases.

\begin{figure}[!htbp]
	\centering
	\subfloat[]{\includegraphics[width=0.45\linewidth]{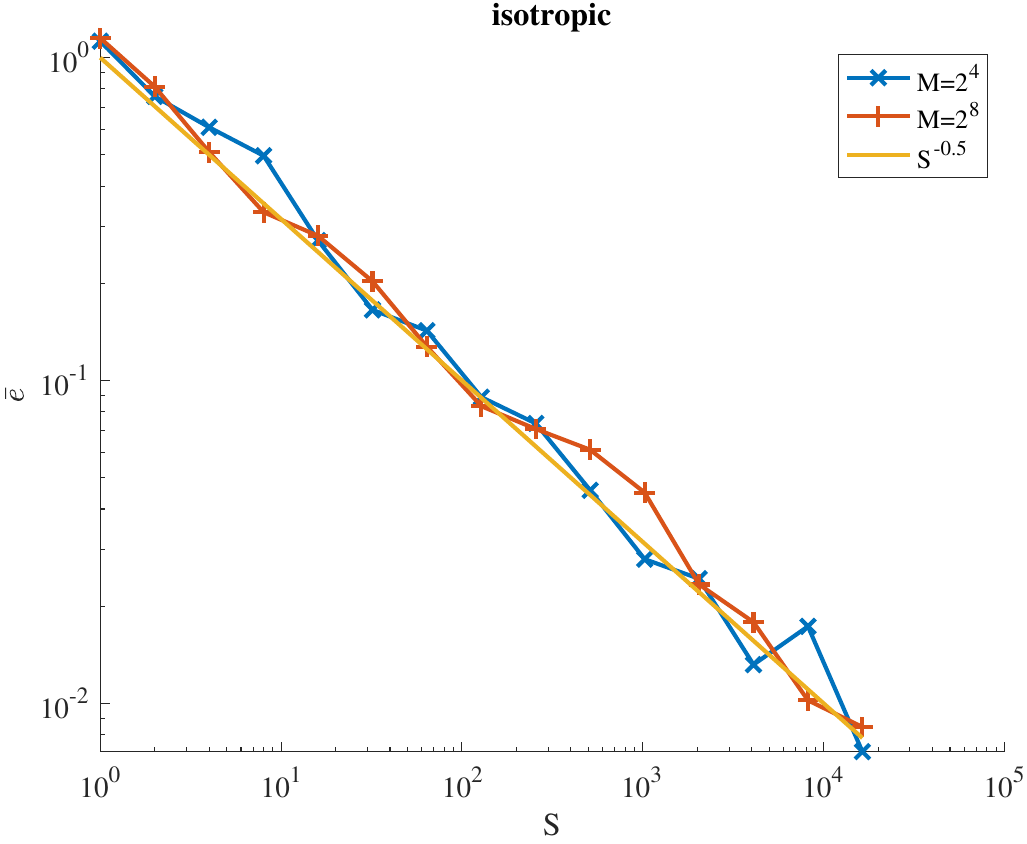}\label{errmSisoex2}}
	\subfloat[]{\includegraphics[width=0.45\linewidth]{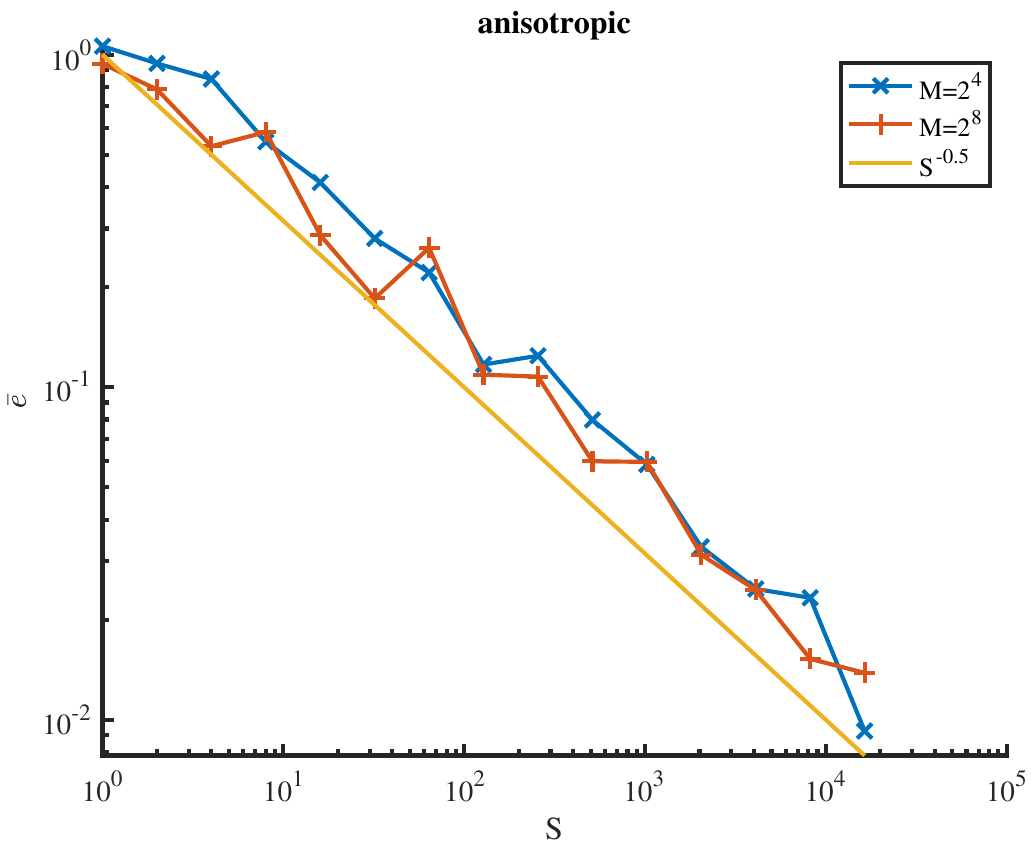}\label{errmSanisoex2}}
	\caption{Example 2. The relative $\ell^2$ error $\bar{e}$ w.r.t number of chain $S$, with $M=2^4, 2^8$ and (a) isotropic scattering; (b) anisotropic scattering.}
	\label{2DisowrtS2}
\end{figure}

To demonstrate more clearly the mitigation of ray effect,  we use $I=J=280$ for Fig. 8. For isotropic scattering, the numerical solutions of DOM with a fine quadrature $M=2^{14}$ and coarse quadrature $M=2^4$ are respectively displayed in Figures \ref{lattice_solution} (a) (b) and \ref{lattice_solution} (c) (d). \blue{The source iteration for the coarse quadrature converges in $N_{itr}=17$ steps in this case.} We use $e^\infty$ in \eqref{einfty} to quantify the ray effect. We find $e^{\infty}=0.090$ for $M=2^4$. Upon applying RSI, as depicted in Figure \ref{lattice_solution} (e) (f), even with only $S=2^8$ samples and the same number of iteration \blue{$N_{itr}=17$} (comparable in complexity to coarse quadrature DOM), the ray effect has been significantly mitigated, yielding $e^{\infty}=0.031$. Due to the ergodicity, one can take the average of the tail data, $e^{\infty}$ can be further reduced. As shown in Figure \ref{lattice_solution} (g) (h), when $S=2^8$ but the average is taken for \blue{$N_{itr}=17$ to $N_{itr}+40=57$}, $e^{\infty}$ is reduced to 0.011.
\begin{figure}[!htbp]
	\centering
	% \subfloat[]{\includegraphics[width=0.4\linewidth]{8aref.pdf}\label{RE_DD_fine2}}
 % \subfloat[]{\includegraphics[width=0.4\linewidth]{9aref.pdf}\label{RE_DD_fine2cut}}\\
	% \subfloat[]{\includegraphics[width=0.4\linewidth]{8bSI.pdf}\label{RE_DD_coarse2}}
 % \subfloat[]{\includegraphics[width=0.4\linewidth]{9bSI.pdf}\label{RE_DD_coarse2cut}}\\
	% \subfloat[]{\includegraphics[width=0.4\linewidth]{8c_rsi256.pdf}\label{RE_DDR_coarse2}}
 % \subfloat[]{\includegraphics[width=0.4\linewidth]{9c256.pdf}\label{RE_DDR_coarse2cut}}\\
 %    \subfloat[]{\includegraphics[width=0.4\linewidth]{8fplus40mean.pdf}\label{RE_DDR_fine2}}
 %    \subfloat[]{\includegraphics[width=0.4\linewidth]{9e256mean.pdf}\label{RE_DDR_fine2cut}} 
 \includegraphics[width=0.86\linewidth]{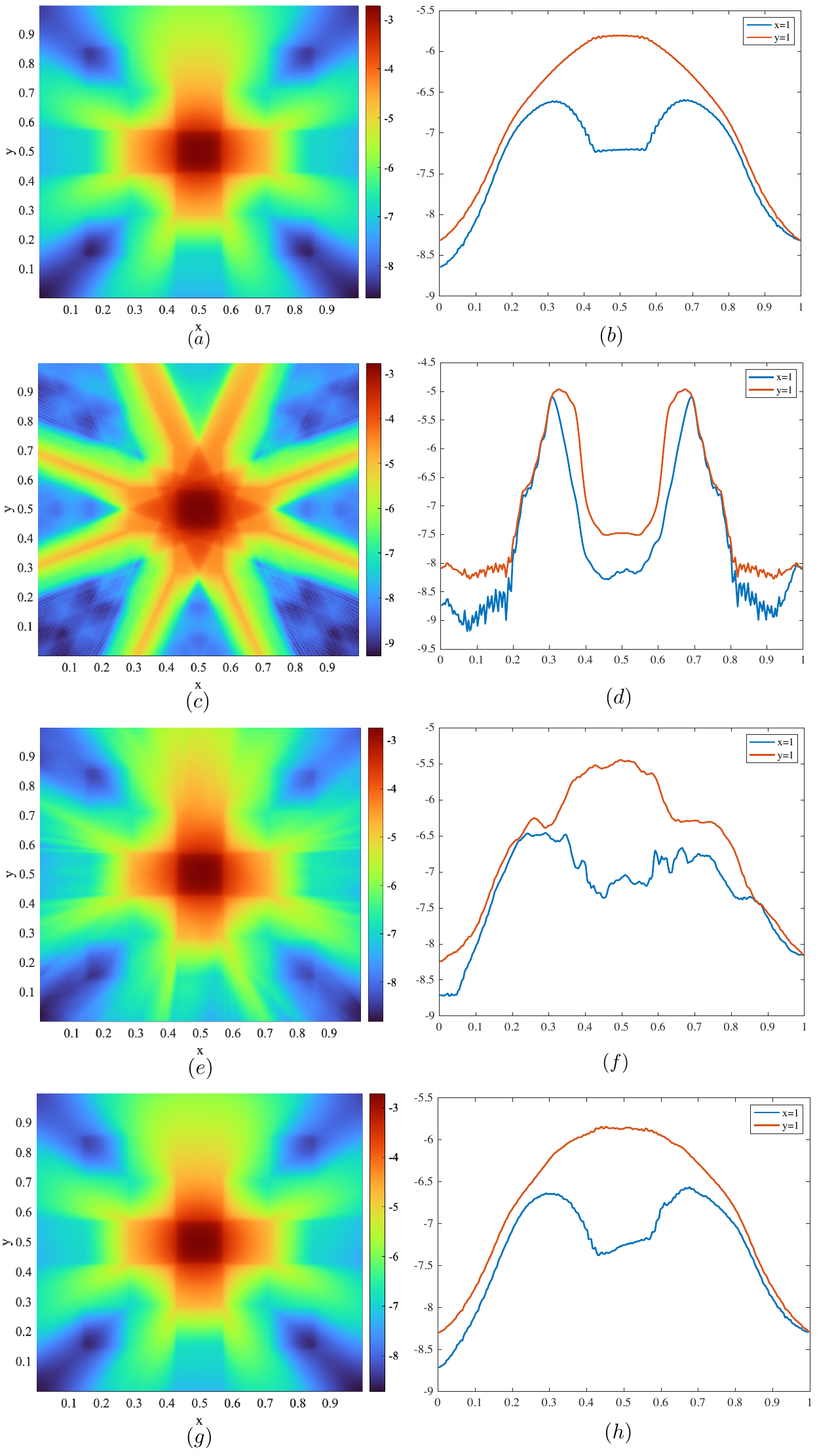}
	\caption{Example 2: logarithimic value of the approximated $0^{th}$ order moment $\phi_{0,i,j}$ for (a,b) source iteration with fine quadrature ($M=2^{14}$); (c,d) source iteration with coarse quadrature ($M=2^4$); (e,f) random source iteration with $M=2^{14}$, $S=2^8$ and the same number of iteration (g,h) random source iteration with $M=2^{14}$, $S=2^{8}$ but the average is taken for $N_{iter}$ to $N_{iter}+40$.}
	\label{lattice_solution}
\end{figure}

\section{Conclusion and discussions}\label{sec:discussion}
In this paper, we introduce and analyze the RSI method. We have rigorously proved that RSI is unbias w.r.t. to the SI method and  that its variance is uniformly bounded across iteration steps. We view the iterates of RSI method as the states of a Markov chain and give the convergence rate of the laws (distributions) of the states to the invariant measure.

RSI can mitigate the ray effect without increasing the overall computational cost and has the benefit of being easy to parallelize.
A lot of parallel codes have been developed based on the SI, 
for example, as seen in \cite{chen2017ares, zhao2007parallel} and the references therein. The standard approach involves using domain decomposition and a carefully designed sweeping process for the spatial transport operator, as detailed in \cite{chen2017ares}. The specifics of scheduling may vary depending on different spatial geometries and types of spatial meshes, as discussed in \cite{adams2020provably}. If one sample is assigned to one processor, there is no information exchange between different ordinates, thus fully parallelizing the angular variable. It is straightforward to couple RSI with spatial parallelism; however, designing the most efficient coupling strategy will be the subject of our future work.

The SI method converges slowly for optically thick materials, as noted in \cite{adams2002fast}. To accelerate the convergence rate of the SI in such cases, various acceleration techniques have been proposed. It is interesting to investigate how to couple RSI  with these acceleration techniques, such as Diffusion-Synthetic Acceleration (DSA), in order to effectively handle optically thick cases.  \blue{Moreover, modern iteration methods rely on preconditioned GMRES. One of the main points is that the iteration is not performed on the ordinates, but on the moments of the solution. It would be interesting to investigate whether the key idea of RSI can be extended to state-of-the-art linear solvers.}
 
%As discussed in \cite{calvin2010hpc}, some codes use the two level parallelism to realize both spatial and angular parallelism. By assigning one processor to each angular direction, the spatial parallelism is then realized by parallel transport sweeps. The information of different angular directions are then collected together to get the scattering source for the next iteration. 

\appendix

%\section{Some lemma}

	\bibliographystyle{abbrvnat}
	\bibliography{main_template}
\end{document}